\providecommand{\keywords}[1]{{\textit{Key words.}} #1}
\providecommand{\ams}[1]{{\textit{AMS subject classifications.}} #1}
 \newtheorem{thm}{Theorem}[section]
 \newtheorem{prop}{Proposition}[section]
\newcommand{\R}{\mathbb R}
\newcommand{\ppf}[2]{\frac{\partial #1}{\partial #2}}
\newcommand{\pd}[2]{\partial_{#1}^{#2}}
\newcommand{\TheTitle}{A space-time finite element method for neural field equations with transmission delays}
\title{{\TheTitle}}
\author{
  M\'onika~Polner\thanks{Bolyai Institute, University of Szeged, H-6720 Szeged, Aradi v\'ertan\'uk tere 1, Hungary and ELI-ALPS, ELI-HU Ltd, Dugonics t\' er 13, Szeged 6720, Hungary. (\href{mailto:polner@math.u-szeged.hu}{polner@math.u-szeged.hu}).}
  \and
  J.~J.~W.~van~der~Vegt\thanks{Mathematics of Computational Science Group, Department of Applied Mathematics, University of Twente, P.O. Box 217, 7500 AE, Enschede, The Netherlands. (\href{mailto:j.j.w.vandervegt@utwente.nl}{j.j.w.vandervegt@utwente.nl}).}
  \and
  S.~A.~van~Gils\thanks{Applied Analysis, Department of Applied Mathematics, University of Twente, P.O. Box 217, 7500 AE, Enschede, The Netherlands. (\href{mailto:s.a.vangils@utwente.nl}{s.a.vangils@utwente.nl})}
}
\begin{document}

\maketitle

\begin{abstract}
We present and analyze a new space-time finite element method for the 
solution of neural field equations 
with transmission delays. The numerical treatment of these systems is rare in the literature and 
currently has several restrictions on the spatial domain and the functions involved, 
such as connectivity and delay functions. The use of a space-time discretization, 
with basis functions that are discontinuous in time and continuous in space (dGcG-FEM), is a 
natural way to deal with space-dependent delays, which is important for many neural field applications. 
In this article we provide a detailed description of a space-time dGcG-FEM  algorithm for neural delay 
equations, including an a-priori error analysis. We demonstrate the application of the dGcG-FEM algorithm 
on several neural field models, including problems with an inhomogeneous kernel.

\vspace*{.3cm}
\noindent\keywords{Neural fields, transmission delays, discontinuous Galerkin, finite element methods, space-time methods}

\vspace*{.3cm}
\noindent\ams{65M60, 65M15, 65R20, 37M05, 92C20}

\end{abstract}
\section{Introduction}
The motivation of this work is the need for numerical methods that can accurately and efficiently 
discretize delayed integro-differential equations originating from neural field models, 
in particular when the delay in the system is space dependent. Only a few studies 
considered so far the numerical treatment of neural field systems, 
see \cite{faugeras}, \cite{hutt}, \cite{evelyn} and references therein. In \cite{faugeras}, the authors used special 
types of delay and connectivity functions in order to reduce the spatial discretization to a large system 
of delay differential equations with constant time delays. 
This system was then solved with the Matlab solver {\it dde23.} In \cite{hutt} a new numerical scheme 
was introduced that includes a convolution structure and hence allows the implementation of fast 
numerical algorithms. In both studies the connectivity kernel depends on the distance between two 
spatial locations. This choice has been shown to model successfully neural activity known from 
experiments, it introduces, however, also a limitation to the applicability of the presented techniques. 

Here we propose the use of space-time finite element methods using discontinuous basis functions in time 
and continuous basis functions in space (dGcG-FEM), which are 
well established to solve ordinary and partial differential equations, e.g. \cite{eriksson_1}, 
\cite{long_time}, \cite{eriksson_johnson_thomee}, \cite{hughes}, \cite{thomee_book}, \cite{vandervegt2002}. 
The novelty of this work is the successful application of the space-time dGcG-method to the neural field equations. 
The motivation of our choice is that the time-discontinuous Galerkin method has good long-time accuracy, \cite{long_time}, 
\cite{thomee_book}. Moreover, the use of a space-time discretization is a 
natural way to deal also with the space-dependent delays. As it will be discussed later, there is 
no need in a space-time method to interpolate the solution from previous time levels. 
The space-time dGcG-method was successfully used for 
stiff systems and is well suited for mesh adaptation, which is of great 
importance when local changes in the solution are of interest. 
Further benefits are that we do not need to make restrictions, neither to the 
functions involved in the system, such as the connectivity kernel or the delay function, 
nor to the dimension or shape of the spatial domain. 

In this article we present a novel space-time dGcG-method for delay differential equations. 
We provide a theoretical analysis of the stability and order of accuracy of the numerical discretization 
and demonstrate its application on a number of neural field problems. 
We focus on the design and an a-priori error analysis of the space-time dGcG-FEM for nonlinear neural 
field equations with space dependent delay. 

The outline of this article is as follows. In the introductory Section \ref{s:intro} we recall a 
mathematical model for neural fields. In Section \ref{s:time stepping} we introduce the 
space-time dGcG-FEM method. The main difficulty is the treatment 
of the delay term in the neural field equations, which is investigated in detail in Section \ref{s:how to}. 
An a-priori error analysis of the space-time discretization is given in Section \ref{s:error analysis}. 
Next, we show in Section \ref{s:simulations} some numerical simulations 
for the neural field equations in one spatial dimension with one population. 
These examples are taken from literature, \cite{faugeras}, \cite{vg}, where both analytical and numerical 
results are known for comparison. We demonstrate some further computational benefits of the space-time dGcG-FEM 
by introducing an inhomogeneous kernel in the delay term in Section \ref{s:inhomogenity}. 
The numerical algorithms presented in \cite{faugeras} and \cite{hutt}, are not 
suitable for the treatment of local inhomogeneities.  

In consecutive papers we will show computations on more complicated spatial domains and extend the model 
to more populations in the neural field system.
\section{Neural fields with space dependent delays}\label{s:intro}
The mathematical model for neural fields with space-dependent delays is as follows. 
Consider $p$ populations consisting of neurons distributed over a 
bounded, connected and open domain $\Omega\subset \mathbb{R}^d,$ $d=1,2,3.$ For each $i,$ the variable 
$V_i(t,r)$ is the membrane potential at time $t,$ averaged over those neurons in the $i$th population 
positioned at $r\in\Omega.$ These potentials are assumed to evolve according to the following system of 
integro-differential equations 
\begin{equation}\label{NF}
\frac{\partial V_i}{\partial t}(t,r)=-\alpha_i V_i(t,r)+\sum_{j=1}^{p}\int_\Omega J_{ij}(r,r',t)S_j(V_j(t-\tau_{ij}(r,r'),r'))d\, r',
\end{equation}
for $i=1,\dots, p.$ The intrinsic dynamics exhibits exponential decay to the baseline level $0,$ as 
$\alpha_i>0.$ The propagation delays $\tau_{ij}(r,r')$ measure the time it takes for a signal sent by a 
type-$j$ neuron located at position $r'$ to reach a type-$i$ neuron located at position $r.$ 
The function $J_{ij}(r,r',t)$ represents the connection 
strength between population $j$ at location $r'$ and population $i$ at location $r$ at time $t.$ 
The firing rate functions are $S_j.$ For the definition and interpretation of these functions we refer to \cite{veltz}. 
Some examples will be given in later sections. 

Throughout this paper we consider a single population, $p=1,$ in a bounded domain $\Omega\subset \R^d,$ 
on a time interval 
$[t_0,T),$ with $T>t_0$ the final time, 
\begingroup
\allowdisplaybreaks
\begin{equation}\label{NF-1pop_model}
\frac{\partial u}{\partial t}(t,x)=-\alpha u(t,x)+\int_\Omega J(x,r)S(u(t-\tau(x,r),r))d\, r,\quad \alpha>0.
\end{equation}
Note that we will only deal with autonomous systems. Therefore we assume from here on that the connectivity 
does not depend on time. We assume that the following hypotheses are satisfied for the 
functions involved in the system, (as in \cite{vg}):
the connectivity kernel $J\in C(\bar\Omega\times\bar\Omega),$ 
the firing rate function $S\in C^\infty (\R)$ and its $k$th derivative is bounded for every $k\in \mathbb{N}_0,$ 
the delay function $\tau\in  C(\bar\Omega\times\bar\Omega)$ is non-negative.

Without loss of generality, we take $t_0=0.$ From the assumption on the delay function $\tau,$ 
we may set
\[
0< \tau_{max}=\sup_{(x,r)\in\bar\Omega\times\bar\Omega} \tau(x,r)<\infty.
\]
Note that when $\tau_{max}=0,$ the delay function $\tau(x,r)=0$ for all $(x,r)\in \bar\Omega\times\bar\Omega,$ and 
in this case (\ref{NF-1pop_model}) reduces to an integro-differential equation without delay. As we will see later, our 
numerical method can handle this case as well. 

Let $Y=C(\bar\Omega)$ and set $X=C\left([-\tau_{max},0];Y\right).$ For $\varphi\in X,$ $s\in[-\tau_{max},0]$ and for 
$x\in\Omega$ we write $\varphi(s)(x)=\varphi(s,x),$ and its norm is given by
\[
\| \varphi\|_X =\sup_{s\in[-\tau_{max},0]}\| \varphi(s,\cdot)\|_Y,
\]
where $\| \varphi(s,\cdot)\|_Y=\sup_{x\in\Omega}|\varphi(s,x)|.$ 
From the assumption on the connectivity kernel, it follows that it is bounded in the following norm
\[
\Vert J\Vert_C = \sup_{(x,r)\in\bar\Omega\times\bar\Omega}|J(x,r)|.
\]
We use the traditional notation for the state of the system at time $t$ 
\[
u_t(s)=u(t+s)\in C(\bar\Omega),\quad s\in[-\tau_{max},0],\ t\geq 0.
\]
Define the nonlinear operator $G:X\to Y$ by
\begin{equation}
G(\varphi)(x)=\int_\Omega J(x,r)S\left(\varphi(-\tau(x,r),r)\right)d\, r.
\end{equation}
Then the neural field equation (\ref{NF-1pop_model}) can be written as a delay differential equation (DDE) as
\begin{equation}\label{DDE_G}
\ppf{u}{t}(t)=-\alpha u(t)+G(u_t),
\end{equation}
where the solution is an element of $C([-\tau_{max},\infty);Y)\cap C^1([0,\infty);Y).$ Similarly, we have the 
state of the solution at time $t$ defined as $u_t(s)(x)=u(t+s,x),$ 
$s\in[-\tau_{max},0],$ $t\geq 0,$ $x\in\Omega.$ It was shown in \cite{vg} that under 
the above assumptions on the connectivity, the firing rate function and delay, the operator $G$ is well-defined and it satisfies a global Lipschitz condition.  

Note that the assumptions on the firing rate function $S$ imposed in \cite{vg} were needed for further 
analysis of the neural field equations. For the numerical analysis presented in this paper it is 
sufficient to assume that $S$ is Lipschitz continuous.

\section{The discontinuous Galerkin finite element \\ method}\label{s:time stepping}
The starting point of our numerical discretization is the weak formulation. 
The numerical method is investigated for the nonlinear equation \eqref{DDE_G}, which 
may be written in variational form as: Find $u\in C^1\left([0,T),Y\right)\cap C\left([-\tau_{max},T),Y\right)$ such that
\begin{align}\label{weak_formC} 
&\Bigl(\ppf{u}{t}(t)+\alpha u(t),v\Bigr)-\left(G(u_t),v\right)=0, &\forall v\in Y,\ \forall t\in(0,T),\\[7pt]
&\ u(s)=u_0(s), &s\in[-\tau_{max},0],\label{initial_condition}
\end{align}
where $(\cdot,\cdot)$ is the usual $L^2(\Omega)$ inner product. Here the delay contribution is 
expressed as 
\[
\bigl(G(u_t),v\bigr)=\int_\Omega G(u_t)(x)v(x) dx
=\int_\Omega\int_\Omega J(x,r)S\left(u_t(-\tau(x,r),r)\right)dr\,v(x)dx.
\]
Note that for any $t>0,$ all functions 
in the inner product are elements of $Y=C(\bar\Omega),$ which is a dense subset of $L^2(\Omega),$ 
hence the inner product is well-defined.  
\subsection{The space-time dGcG-FEM discretization}\label{s:space-time}
Consider the neural field equations in the domain $\Omega.$ We will 
not distinguish between space and time variables and consider directly the space $\R ^{d+1},$ where 
$d$ is the number of space dimensions. 

Let $\mathcal{E}\subset\R^{d+1}$ be an open, bounded 
space-time domain in which a point has coordinates $(t,x)\in \R^{d+1},$ with 
${x}\in\R^d$ the position vector and time $t.$ 
First, partition the time interval 
$\bar I=[0,T]$ using the time levels $0=t_0<t_1<\ldots<t_{N}=T$ and denote by $I_n=(t_{n-1},t_n]$ the 
$n$-th time interval of length $k_n=t_n-t_{n-1}.$ 
A space-time slab is defined as $\mathcal{E}^n=I_n\times\Omega.$ 
Second, we approximate the spatial domain $\Omega$ with $\Omega_h$ using a tessellation of non-overlapping 
hexahedral elements (line elements in 1D, quadrilaterals in 2D, etc.)
\[
\bar{\mathcal{T}}_h=\left\{ K_j : \bigcup_{j=1}^{M} \bar{K}_j=\bar{\Omega}_h,\ 
K_j\cap K_i=\emptyset\ \text{if}\ i\not= j\right\}.
\]
The domain approximation is such that $\Omega_h\to\Omega$ 
as $h\to 0,$ where $h$ is the radius of the smallest sphere containing each element $K_j\in\bar{\mathcal{T}}_h.$ 
The space-time elements $\mathcal{K}_j^n$ are now obtained as $\mathcal{K}_j^n=(t_{n-1},t_n)\times K_j$. 
The space-time tessellation is defined as
\[
\mathcal{T}_h^n=\left\{ \mathcal{K}=G_\mathcal{K}^n(\hat{\mathcal{K}}) : K\in \bar{\mathcal{T}}_h\right\},
\]
where $G^n_\mathcal{K}$ denotes the mapping from the space-time
reference element $\hat{\mathcal{K}}=(-1,1)^{d+1}$ to the space-time element in physical space $\mathcal{K},$ 
see Fig.~\ref{fig:static_mesh}. The tessellation $\mathcal{T}_h$ of the whole discrete space-time domain is 
$\mathcal{T}_h=\cup_{n=1}^N\mathcal{T}_h^n.$
\begin{figure}
\begin{center}
\includegraphics[scale=.60]{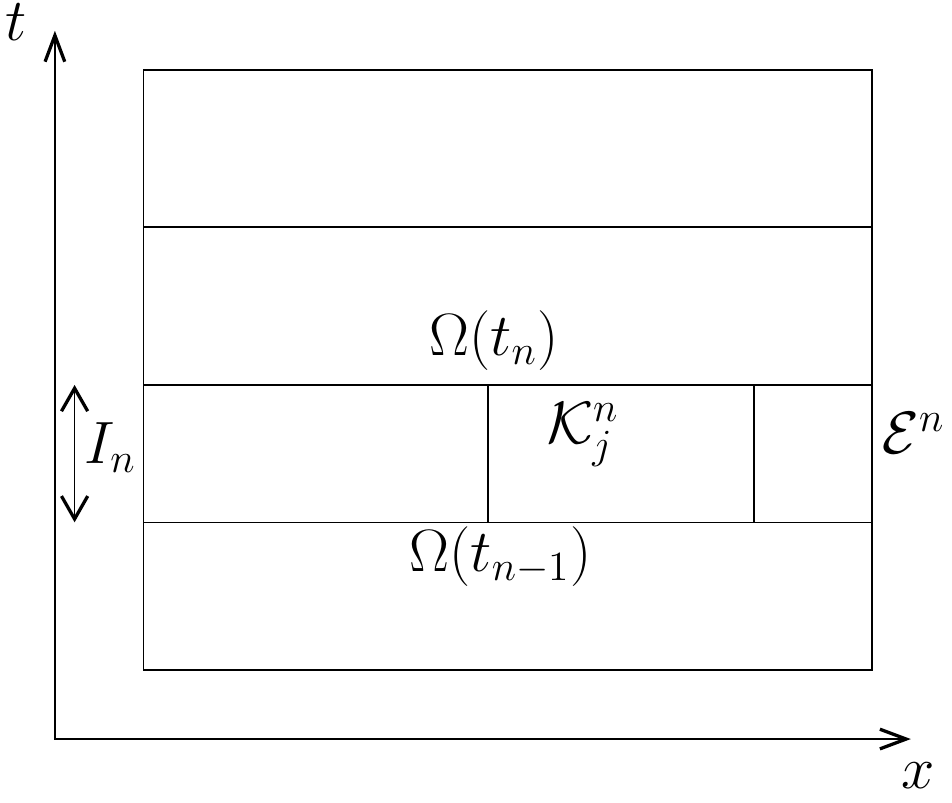}
\caption{Two-dimensional space-time elements in physical space.}
\label{fig:static_mesh}
\end{center}
\end{figure}

The space-time FEM discretization is obtained by approximating the test and trial functions in 
each space-time element in the tessellation $\mathcal{K}^n\in\mathcal{T}_h^n$ with polynomial expansions 
that are assumed to be continuous within each space-time slab, but
discontinuous across the interfaces of the space-time slabs, namely at times $t_0,
t_1,\ldots , t_{N}.$ 

The finite element space associated with the tessellation $\mathcal{T}_h^n$ is 
defined as:
\begin{align}\label{FEM_space_slab}
V_h^n&=\bigl\{u\in  C(\mathcal{E}^n) :
u\mid_{\mathcal{K}}\circ\, G^n_{\mathcal{K}}\in \left(\hat{\mathcal{P}}_q(-1,1)\otimes
\hat{\mathcal{P}}_{r}(\hat{K})\right),
\forall\,\mathcal{K}\in\mathcal{T}_h^n\bigr\},
\end{align}
where $\hat{\mathcal{P}}_q(-1,1)$ and $\hat{\mathcal{P}}_{r}(\hat{K}),$ respectively, represent $q$th-order 
polynomials on $(-1,1)$ and $r$th-order tensor product polynomials in the reference element $\hat{K}=(-1,1)^d$. 
Finally, define 
\[
V_h=\{u\in L^2(\mathcal{E}) : u\mid_{\mathcal{E}_n}\in V_h^n,\ n=1,2,\dots,N\}.
\]
Note that the functions in $V_h$ are allowed to be discontinuous at the nodes of the partition of the time 
interval. We will use the notations $u^{n,\pm}=\lim_{s\to 0^\pm}u(t_n+s).$ 
Moreover, since $0\not\in I_1,$ we specify $u^{0,-}=u_0(0).$ 

The space-time dGcG-FEM method applied to problem (\ref{weak_formC})-\eqref{initial_condition} 
can be formulated as: find $u_h\in V_h$ such that 
\small
\begin{align}\label{global_weak}
&\sum_{n=1}^N\sum_{\mathcal{K}\in\mathcal{T}_h^n}\left[\Bigl(\ppf{u_h}{t}+\alpha u_h,v_h\Bigr)_{\mathcal{K}}-\int_{\mathcal{K}}\left[\int_\Omega J( x ,r)S\left(u_h(t-\tau( x ,r),r)\right)d\, r\right] v_h(t, x )dx \,dt\right]\nonumber\\
&+\sum_{n=2}^{N}\left( [u_h]_{n-1},v_h^{n-1,+}\right)
+\left(u_h^{0,+},v_h^{0,+}\right)=\left(u_0(0),v_h^{0,+} \right)
\end{align}
\normalsize
holds for all $v_h\in V_h$ and where $u_h^{0,-}=u_0(0).$ Here $[u_h]_n=u_h^{n,+}-u_h^{n,-}$ denotes the 
jump of $u_h$ at $t_n$ and $(\cdot,\cdot)_{\mathcal{K}}$ is the $L^2(\mathcal{K})$-inner product on a 
space-time element. The jumps were added to the weak 
formulation to ensure weak continuity between time slabs, since the basis functions 
in dGcG-FEM discretizations are discontinuous at the space-time slab boundary. 

Note that throughout this paper the FEM solution will be denoted by $u_h,$ which 
should not be confused with the state of the system notation introduced in Section \ref{s:intro}.  Moreover, it is 
important to remark that, for $u_h\in V_h,$ the segments ${u_h}_t,$ $t>0$ are not necessarily continuous, 
but piecewise continuous on $[-\tau_{max},0].$ Denoting the space of piecewise continuous functions on $[-\tau_{max},0]$ by 
$\hat X=PC\left([-\tau_{max},0];Y\right),$ we define the operator $\hat G: \hat X\to Y$ as
\begin{equation}\label{linear_operatorLhat}
\hat G\psi =\int_\Omega J(\cdot,r)S\left(\psi(-\tau(\cdot,r),r)\right)dr,\quad \psi\in\hat X.
\end{equation}
Then the nonlinear integral operator in \eqref{global_weak} is equal to $\hat G({u_h}_t).$

The weak formulation (\ref{global_weak}) can be transformed into an integrated-by-parts form, and since we 
added the jump term at each time level, it is possible to drop the summation over the 
space-time slabs. Moreover, after integration by parts, (\ref{global_weak}) can be decoupled into a sequence of local problems 
by choosing test functions that have support only in a single space-time slab $\mathcal{E}^n$. 
Hence we can solve the problem successively, i.e., 
using the known value $u_h(t_{n-1}^-)$ from the previous space-time slab. 
The weak formulation for the dGcG-FEM discretization of the neural field equation is the following:

Find $u_h\in V_h^n,$ such that for all $v_h\in V_h^n$ the variational equation is satisfied:
\begin{align}\label{weak-form}
&\int_{\mathcal{K}^n}\left(-u_h\ppf{v_h}{t}+\alpha u_h v_h\right)dx dt
+\int_{K(t_{n})}u_h^{n,-}v_h^{n,-} dx\nonumber\\[7pt]
&-\int_{\mathcal{K}^n}\hat G\left({u_h}_t\right)(x) v_h dx dt=\int_{K(t_{n-1})}u_h^{n-1,-}v_h^{n-1,+} dx,
\end{align}
with $\mathcal{K}^n\in\mathcal{T}_h^n$ for $n=1,\dots,N$. 

Note here that the delay term may use values from space-time slabs where the solution was computed previously, 
but also from the current space-time slab, depending on the magnitude of the delay function compared to the time step. 
This problem will be discussed later in detail. 
\subsection{How to treat the delay term?}\label{s:how to}
In this section we discuss the dGcG-FEM approximation of the delay term in the weak formulation 
(\ref{weak-form}). Introduce the approximation 
\begin{equation}\label{linear_eq}
u_h(t, x )\mid_{\mathcal{K}}=\sum_{m=1}^{N_p} \hat{u}_m^\mathcal{K}\psi_m^\mathcal{K}(t, x )
\end{equation}
into (\ref{weak-form}) and set the test function $v_h(t, x )\mid_{\mathcal{K}}=\psi_i^\mathcal{K}(t, x ),$ 
$i\in\{1,\dots,N_p\},$ with $N_p$ the number of degrees of freedom in element $\mathcal{K}$ and 
 $\psi_i^\mathcal{K}$ standard Lagrange tensor product basis functions. The delay term becomes
\begin{align}\label{delay term}
&\int_{\mathcal{K}}\psi_i^\mathcal{K}(t, x )\left(\int_\Omega J(x,r) S\left(u_h(t-\tau(x,r),r)\right) d r\right) d x\,d t \nonumber\\[7pt]
&={\int_{\mathcal K}} \psi_i^{\mathcal K}(t, x )
\Bigl(\sum_{L\in\bar{\mathcal{T}}_h} \int_L J(x,r)S\Bigl(\sum_{m=1}^{N_p}\hat{u}_m^L\psi_m^L(t-\tau(x,r),r)\Bigr) d r\Bigr) dx dt.
\end{align}
All integrals in the weak formulation are evaluated using Gaussian quadrature rules. Let us fix 
a quadrature point $(t_q,x_q)\in \mathcal{K}^n$ in a space-time element and let 
$\tau_{max}=\max_{(x,r)\in\bar\Omega\times\bar\Omega}\tau(x,r),$ as before.  
To compute the integral over a space element $L$ in \eqref{delay term}, consider a space quadrature point 
$r_{qs}\in\Omega,$ and 
distinguish three cases for the time delay $t_q-\tau(x_q,r_{qs})$, see Figure \ref{fig:grid}:

\medskip

{\bf Case 1.} If $-\tau_{max} \leq t_q-\tau(x_q,r_{qs})\leq 0,$ then the solution at this time level is given by 
the initial solution, i.e., $u_h(t_q-\tau(x_q,r_{qs}),r_{qs})=u_0(t_q-\tau(x_q,r_{qs}),r_{qs}).$

\medskip

{\bf Case 2.} When $t_q-\tau(x_q,r_{qs})\geq t_{n-1},$ then the delay term \eqref{delay term} is implicit since we 
remain in the same space-time slab $\mathcal{E}^n,$ where the solution is unknown. Hence, when the delay time is small 
enough compared to the time step, an additional Newton method needs to be incorporated for the solution of the nonlinear system. 

If we introduce the finite element approximations for $u_h$ and $v_h$ also into the other terms in the 
weak formulation (\ref{weak-form}), then we obtain for all $\mathcal{K}\in\mathcal{T}_h^n$
\begin{align}\label{weak-form-general}
&\sum_{j=1}^{N_p}\left\{\hat{u}_j^\mathcal{K}\int_{\mathcal{K}}\Bigl(-\psi_j^{\mathcal K}(t, x )\frac{\partial }{\partial t}\psi_i^{\mathcal K}(t, x )+\alpha \psi_j^{\mathcal K}(t, x )\psi_i^{\mathcal K}(t, x )\Bigr)dx dt\right.\nonumber\\[7pt]
&\qquad+\left.\hat{u}_j^\mathcal{K}\int_{K(t_{n}^-)}\psi_j^{\mathcal K}(t_{n}^-, x )\psi_i^{\mathcal K}(t_{n}^- , x )dx\right\}\nonumber\\[7pt]
&-\sum_{L\in\bar{\mathcal{T}}_h}{\int_{\mathcal K}} \psi_i^{\mathcal K}(t, x )
\Bigl[\int_L J(x,r)S\Bigl(\sum_{m=1}^{N_p} \hat{u}_m^L\psi_m^L(t-\tau(x,r),r)\Bigr) d r\Bigr] dx dt \nonumber\\[7pt]
&=\sum_{j=1}^{N_p}\hat{u}_j^{\mathcal{K},n-1}\int_{K(t_{n-1}^+)}\psi_j^{\mathcal K}(t_{n-1}^-, x )\psi_i^{\mathcal K}(t_{n-1}^+, x )dx,
\end{align}
where $ \hat{u}_j^{\mathcal{K},n-1}$ are the coefficients of space-time element $\mathcal{K}$ in the space-time slab $\mathcal{E}^{n-1}.$

\medskip

{\bf Case 3.} When $0\leq t_q-\tau(x_q,r_{qs})< t_{n-1},$ then the delay term is explicit since we go 
back to a previous space-time slab, where the FEM solution is already computed. 
\begin{figure}
\begin{center}
\includegraphics[scale=.4]{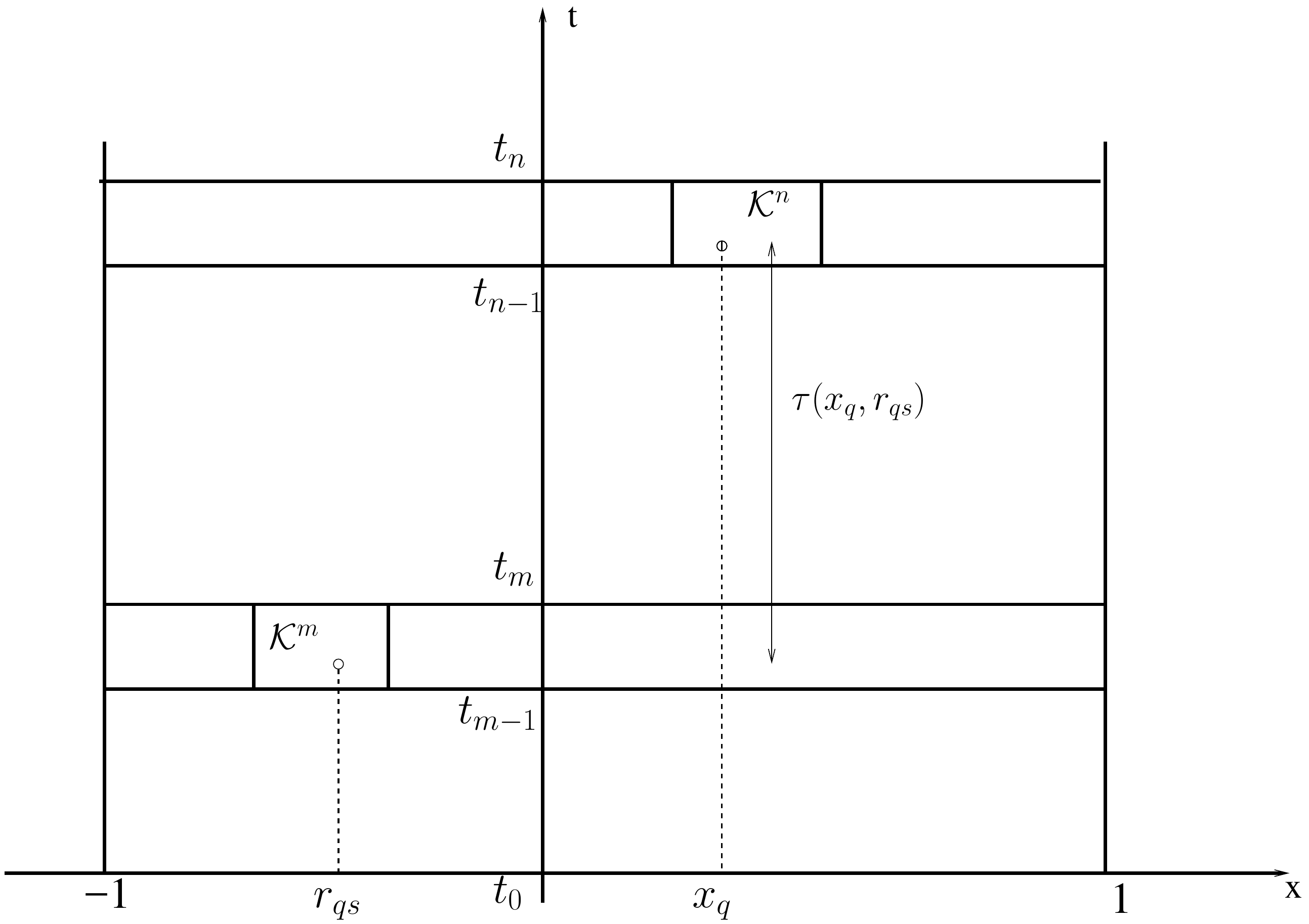}
\caption{The computational domain.}
\label{fig:grid}
\end{center}
\end{figure}
\endgroup
\section{Error analysis}\label{s:error analysis}
In this section we give an a-priori error analysis for the space-time dGcG-method (\ref{weak-form-general}). 
In the error analysis we will use a slightly modified version of the temporal interpolation functions defined in Proposition 4.1, \cite{thomee_book}. 
First, define the space
\begin{equation}
S_k=\{ w:[0,T]\to Y : w\mid_{I_n}=\sum_{j=0}^ q \varphi_j t^j,\ \varphi_j\in Y,\ \forall n\geq 1\},
\end{equation}
with $I_n=(t_{n-1},t_n]$ and $|I_n|=k_n.$ Note that these functions are allowed to be discontinuous at the nodes of the partition of the time interval, 
but continuous from the left in each subinterval $I_n,$ i.e., $w(t_n)=\lim_{t\to t_n^-}w(t).$ For the 
restriction of the functions in $S_k$ to $I_n,$ we use the notation $S_k^n.$ Define the temporal polynomial 
interpolant
\begin{equation}
T_k: C\left([0,T],Y\right)\to S_k
\end{equation}
as follows, see also \cite{thomee_book}.
\begin{prop}\label{prop:interpolant} 
Let $\tilde{u}=T_k u\in S_k$ be the time-interpolant of $u\in C\left([0,T],Y\right)\cap H^{q+1}\left([0,T],Y\right),$ $q\geq 0,$ 
with the following properties:
\begin{alignat}{2}
&\tilde u(t_{n-1})=u(t_{n-1}),&& \text{for  } n\geq 1\\[5pt]
\int_{I_t}&\left(\tilde u(s)-u(s)\right)s^l ds=0, &&\text{ for  } l=0,\dots, q-1,\ t\in I_n,\ 
I_t=(t_{n-1},t],\ n\geq 1. \label{interpolant}
\end{alignat} 
The interpolation error then can be estimated as
\begin{equation}\label{interpolation_estimate}
\|\tilde u(s)-u(s)\|\leq C_I k_t^{q+1/2}\left(\int_{I_t}\|\partial_{s}^{q+1}u(s,\cdot)\|^2 ds\right)^{1/2}, \text{ for } s\in I_t,
\end{equation}
where $\partial_s^{q+1}$ denotes the $(q+1)$-th order derivative w.r.t. time, $k_t=|I_t|$ 
and the norm $\Vert\cdot\Vert=\Vert\cdot\Vert_{L^2(\Omega)}$ hereafter.
\end{prop}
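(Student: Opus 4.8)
The plan is to construct the interpolant $\tilde u = T_k u$ slab by slab and then localize the error estimate to an arbitrary subinterval $I_t = (t_{n-1}, t]$ by a scaling argument to the reference interval. First I would verify that the interpolation conditions actually determine $\tilde u|_{I_n}$ uniquely: on $I_n$ we seek a $Y$-valued polynomial of degree $q$, which has $q+1$ vector coefficients $\varphi_0,\dots,\varphi_q$; the condition $\tilde u(t_{n-1}) = u(t_{n-1})$ gives one constraint, and the $q$ orthogonality conditions $\int_{I_n}(\tilde u - u)s^l\,ds = 0$ for $l = 0,\dots,q-1$ give $q$ more, for a total of $q+1$. I would check that the associated linear map is invertible (e.g., by noting the homogeneous problem forces a degree-$q$ polynomial vanishing at $t_{n-1}$ and $L^2$-orthogonal to all degree-$(q-1)$ polynomials on $I_n$ to be zero — test it against itself minus a suitable lower-order piece, or just compute the Gram-type determinant after rescaling). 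Since the construction is purely in the scalar variable $t$ with coefficients in the Banach space $Y$, the well-posedness reduces to the scalar case.

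Next I would establish the error bound \eqref{interpolation_estimate}. The natural route is to work on the reference interval $\hat I = (0,1)$: given $t \in I_n$, set $I_t = (t_{n-1}, t]$, write $k_t = |I_t|$, and pull back $u$ and $\tilde u$ to $\hat I$ via the affine map $s = t_{n-1} + k_t \sigma$. On $\hat I$ the pulled-back interpolant $\hat{\tilde u}$ is the degree-$q$ polynomial matching $\hat u$ at $\sigma = 0$ and satisfying $\int_0^1 (\hat{\tilde u} - \hat u)\sigma^l\,d\sigma = 0$ for $l = 0,\dots,q-1$. This is a bounded linear projection-type operator on $C([0,1],Y) \cap H^{q+1}((0,1),Y)$ that reproduces polynomials of degree $q$; hence by a Bramble–Hilbert / Peano-kernel argument its error is controlled by $\|\partial_\sigma^{q+1}\hat u\|_{L^2((0,1),Y)}$, with a constant $C_I$ depending only on $q$. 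Here one must be a little careful: evaluation at $\sigma=0$ is not $L^2$-continuous, so the Bramble–Hilbert step should be applied in the $H^{q+1}$ norm (which does control point values via Sobolev embedding in one dimension) and then one subtracts off the mean/lower-order part so the final bound only sees the top derivative. Undoing the scaling, $\partial_\sigma^{q+1}\hat u = k_t^{q+1}\,\partial_s^{q+1}u$ and the change of variables in the $L^2$ integral produces a factor $k_t^{1/2}$; pointwise evaluation of the error on $\hat I$ transfers directly, giving exactly the stated power $k_t^{q+1/2}$.

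The main obstacle I anticipate is the interplay between the two different notions of regularity in play: the estimate is asked for at a point $s \in I_t$ in the $\|\cdot\| = \|\cdot\|_{L^2(\Omega)}$ norm, while the right-hand side is an $L^2$-in-time quantity. Getting the sharp half-power $k_t^{q+1/2}$ rather than the naive $k_t^{q+1}\|\partial^{q+1}u\|_{L^\infty}$ requires handling the pointwise-in-time error via a representation (Peano kernel or explicit integral remainder) whose kernel, after rescaling, has $L^2$-in-$\sigma$ norm bounded independent of $n$ — this is where the orthogonality conditions \eqref{interpolant} are genuinely used, since they are what make the remainder kernel orthogonal to low-order polynomials and hence small. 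A second, more bookkeeping-type subtlety is that $I_t$ is a \emph{variable} subinterval of $I_n$ (its right endpoint $t$ ranges over $I_n$), so the interpolation conditions in \eqref{interpolant} are stated with $I_t$, not $I_n$; I would note that because $\tilde u|_{I_n}$ is a single fixed polynomial, the family of conditions over all $t \in I_n$ is consistent and is in fact equivalent to finitely many conditions (the reference construction above). I would close by remarking that this is the standard dG temporal interpolant of \cite{thomee_book}, modified only in that the orthogonality is imposed against $s^l$ on the running interval $I_t$, and that the proof is otherwise identical to Proposition~4.1 there.
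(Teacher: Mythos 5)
The paper offers no proof of this proposition (it is imported, with a modification, from Proposition~4.1 of Thom\'ee's book), so the only basis for comparison is the standard argument there. Your main line --- unisolvence by counting $q+1$ conditions and killing the homogeneous solution by writing $p(s)=(s-t_{n-1})r(s)$ and testing against $r$, then polynomial reproduction plus boundedness of the interpolation operator on $H^{q+1}((0,1),Y)$, Bramble--Hilbert on the reference interval, and the affine scaling that converts $\|\partial_\sigma^{q+1}\hat u\|_{L^2(0,1)}$ into $k_t^{q+1/2}\bigl(\int_{I_t}\|\partial_s^{q+1}u\|^2\,ds\bigr)^{1/2}$ --- is exactly the right route and is carried out correctly, including the observation that point evaluation at $\sigma=0$ must be controlled through the Sobolev embedding rather than in $L^2$.

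There is, however, one concretely false step: your resolution of the ``variable $I_t$'' issue. You assert that, because $\tilde u|_{I_n}$ is a single fixed polynomial, the family of orthogonality conditions over all $t\in I_n$ is consistent and reduces to finitely many constraints. It is not. If $\int_{t_{n-1}}^{t}\bigl(\tilde u(s)-u(s)\bigr)s^l\,ds=0$ held for \emph{every} $t\in I_n$, differentiating in $t$ would give $\bigl(\tilde u(t)-u(t)\bigr)t^l=0$ for a.e.\ $t$, i.e.\ $\tilde u\equiv u$ on $I_n$, which is impossible for generic $u$ once $q\geq 1$. The conditions on distinct subintervals $I_t$ are mutually incompatible constraints on a single degree-$q$ polynomial, so the proposition can only be read as defining, for each $t$, a separate interpolant on $I_t$ (equivalently, applying the construction to the refined partition with an extra node at $t$); this is also what the subsequent error analysis actually uses, since the cancellation of $\int_{I_t}(\rho,\partial_s v)\,ds$ there requires orthogonality on the running interval $I_t$, not on $I_n$. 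The repair is cheap --- your reference-interval estimate applies verbatim to each such $t$-dependent interpolant with its own $k_t=|I_t|$, and the left-endpoint interpolation condition is shared by all of them --- but the consistency claim as written is wrong and must be replaced by this $t$-dependent reading.
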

Observe that $\tilde u$ interpolates exactly at the nodes and the interpolation error is orthogonal to 
polynomials of degree at most $q-1.$ 
For constant polynomials ($q=0$) condition \eqref{interpolant} is not used.

Next, define the spatial interpolant. Let $W_h$ be the space of tensor
product polynomials of degree up to $r\geq 0$ on each space element
$K_j,$ i.e.,
\begin{equation}
W_h=\{ v\in C(\Omega): v\mid_{K}\circ \,G^n_{K}\in \hat{\mathcal{P}}_r(\hat K), \forall K\in\bar{\mathcal{T}}_h\},
\end{equation}
where $G^n_K$ denotes the mapping from the reference element $\hat{K}=(-1,1)^{d}$ to the element $K\in\bar{\mathcal{T}}_h$ in physical space. Let 
\[
P_h: Y\to W_h
\]
be the $L^2$-projection to the (spatial) finite element space, defined as $\left(P_h v,w_h\right)=\left(v,w_h\right)$ for all $w_h\in W_h.$ We use the standard interpolation estimate in space (see e.g.~\cite{Brenner}, \cite{ciarlet})
\begin{equation}
\Vert v-P_h v\Vert\leq C  h^{r+1}\|v\|_{r+1}\quad \forall v\in Y\cap H^{r+1}(\Omega), 
\end{equation}
where $\|\cdot\|_{r+1}=\|\cdot\|_{H^{r+1}(\Omega)},$ $ h$ denotes the maximal space element 
diameter as before, and the constant $C$ is independent of $ h$ and $v.$ 

In the error analysis we also need the interpolation of the initial segment of the solution. 
Let the given initial function be $u_0\in X\cap H^{q+1}\left([-\tau_{max},0];H^{r+1}(\Omega)\right)$ 
for some $q,r\geq 0.$ Use a partition of the interval $[-\tau_{max},0]$ into $M$ subintervals $J_i$ of length $k_i,$ respectively. 
On each $J_i$ we use the same temporal interpolation $\tilde u_0=T_k u_0$ of $u_0,$ as introduced in 
Proposition \ref{prop:interpolant}. Then for all $s\in J_i$ we have 
\begin{align}\label{interpolation_estimate_psi2}
\Vert u_0(s)&-P_h T_k u_0(s)\Vert =\Vert u_0(s)-P_h \tilde{u}_0(s)\Vert\nonumber\\[5pt]
&\leq \Vert u_0(s)-P_h u_0(s)\Vert+\Vert P_h\Vert \Vert u_0(s)-T_k u_0(s)\Vert\nonumber\\[5pt]
&\leq C h^{r+1}\|u_0(s)\|_{r+1}+ C_I k_i^{q+1/2}\left(\int_{J_i}\|\pd{s}{q+1}u_0(s,\cdot)\|^2 ds\right)^{1/2},
\end{align}
where we use that the operator norm of the Lagrange interpolation $P_h$ is bounded, see \cite{Brenner}, 
\cite{ciarlet_article}.

We will also need an estimate of the integral of the interpolation error on the partition of the initial segment. There exists $C>0$ generic constant (independent of the solution and mesh size), such that
\begin{align}\label{interpolation_estimate_psi_int}
\int_{J_i}&\Vert u_0(s)-P_h T_k u_0(s)\Vert^2 ds\leq C\mathcal{B}(u_0,J_i)\nonumber\\
&:=C  \left(h^{2r+2}k_i \|u_0\|^2_{r+1,J_i}+k_i^{2q+2}\int_{J_i}\Vert \pd{s}{q+1}u_0(s,\cdot)\Vert^2 ds\right),
\end{align}
where we denoted the norm
\[
\|\varphi\|_{r,I}=\sup_{t\in I}\|\varphi(t)\|_r.
\]

Next, we state the main result of the a-priori error analysis of the dGcG discretization \eqref{weak-form} for the neural field equations.
\begin{thm}
Let $u\in C^1\left([0,T);Y\right)\cap H^{q+1}\left([0,T];H^{r+1}(\Omega)\right)$ be the solution of (\ref{DDE_G}) for some $q,r\geq 0$ and with initial state $u_0\in X\cap H^{q+1}\left([-\tau_{max},0];H^{r+1}(\Omega)\right),$ 
and let $u_h\in V_h^n$ be the solution of (\ref{weak-form}). Then 
\begin{align}
\| u_h(t_N)-u(t_N)\|^2\leq C &\left(\sum_{i=1}^M m(i)\mathcal{B}(u_0,J_i)+\sum_{n=1}^N m(n)k_n^{2q+2}\int_{I_n}\|\pd{t}{q+1}u(t,\cdot)\|^2 dt\right.\nonumber\\
&\left.+ h^{2r+2}\sum_{n=0}^N \|u(t_n)\|^2_{r+1}+\sum_{n=1}^N h^{2r+2}m(n)k_n \|u\|^2_{r+1,I_n}\right)
\end{align}
holds for $t_N\geq 0,$ $N$ the number 
of time slabs, where $C$ is a positive constant independent of the time step $k_n=t_n-t_{n-1}$ 
and the maximal space element diameter $h$. Here $m(n)\leq N-1,$ $1\leq n\leq N,$ is the multiplicity how many times we visited the interval $I_n$ due to the delay term.
\end{thm}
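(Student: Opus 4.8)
The plan is to run the classical energy argument for discontinuous-in-time Galerkin schemes, together with a discrete Gr\"onwall inequality that respects the causal (delay) structure of the space--time mesh. I would split the error as $u_h-u=\theta-\eta$, where $\eta=u-\tilde u$ with $\tilde u=P_hT_ku$ the full space--time interpolant on $[0,T]$ (extended to $[-\tau_{max},0]$ by $P_hT_ku_0$), and $\theta=u_h-\tilde u\in V_h$. I further decompose $\eta=(u-P_hu)+P_h(u-T_ku)$, so that the two pieces are controlled by the spatial $L^2$-projection estimate and, using $\|P_h\|\le C$, by the temporal estimate \eqref{interpolation_estimate} respectively; the integrated bound \eqref{interpolation_estimate_psi_int} controls $\eta$ on each initial subinterval $J_i$. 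On the discrete side $\theta^{0,-}=u_0(0)-P_hu_0(0)$ is a pure spatial-projection error, which will supply the $n=0$ summand of $h^{2r+2}\sum_{n=0}^N\|u(t_n)\|_{r+1}^2$.

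Next I would derive the error equation. Inserting the (smooth) exact solution $u$ into the dGcG form \eqref{weak-form} tested against $v_h\in V_h^n$, integrating back by parts in time and using $[u]_n=0$, shows that $u$ satisfies $\sum_n\int_{I_n}(G(u_t),v_h)\,dt=B(u,v_h)$, where $B$ is the dGcG bilinear form (the $-u\,\partial_t v_h+\alpha u v_h$ volume term plus the mass/jump couplings). Subtracting the discrete equation for $u_h$ gives $B(\theta,v_h)=B(\eta,v_h)+\Delta(v_h)$ with $\Delta(v_h)=\sum_n\int_{\mathcal E^n}\bigl(\hat G({u_h}_t)-G(u_t)\bigr)v_h$. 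Taking $v_h=\theta$ and using the elementary dG identity, the left-hand side is bounded below by $\tfrac12\|\theta^{N,-}\|^2+\tfrac12\sum_{n=1}^N\|[\theta]_{n-1}\|^2+\alpha\int_0^{t_N}\|\theta\|^2\,dt$ (after moving $\tfrac12\|\theta^{0,-}\|^2$ to the right). In $B(\eta,\theta)$ the term $\int_{I_n}(\partial_t\eta,\theta)$ is integrated by parts so that $\partial_t\theta|_{I_n}\in\mathcal P_{q-1}$ in time pairs against $\eta$; the orthogonality \eqref{interpolant} kills it, the $L^2$-orthogonality of $P_h$ removes the $(u-P_hu)$ parts of the mass and reaction terms, and what remains are boundary, jump and reaction residuals of optimal order $O(k_n^{q+1})$ and $O(h^{r+1})$, absorbed by Young's inequality into $\epsilon\int_{I_n}\|\theta\|^2$ plus the interpolation quantities appearing in the statement.

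The heart of the proof is the delay defect $\Delta(\theta)$. Using $G=\hat G|_X$ and the global Lipschitz bound for $\hat G$ in the $L^2(\Omega)$-norm (which follows from $S$ Lipschitz and $J\in C(\bar\Omega\times\bar\Omega)$), I would estimate
\[
\bigl|\bigl(\hat G({u_h}_t)-\hat G(u_t),\theta(t)\bigr)\bigr|\le C_G\,\sup_{s\in[-\tau_{max},0]}\|u_h(t+s)-u(t+s)\|\;\|\theta(t)\|,
\]
and split $u_h(t+s)-u(t+s)=\theta(t+s)-\eta(t+s)$ for $t+s\ge0$, while for $t+s<0$ this is the initial-segment interpolation error bounded by \eqref{interpolation_estimate_psi_int}. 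For $t\in I_n$ the window $[t-\tau_{max},t]$ meets a block of earlier slabs $I_{n'}$ (and, in the implicit Case~2, $I_n$ itself); I would bound the $\theta$-part in terms of the running maximum $E_n=\max_{m\le n}\sup_{I_m}\|\theta\|^2$ and the $\eta$-part by \eqref{interpolation_estimate} on each reached slab. Since for a time-polynomial of degree $q$ one has $k_n\sup_{I_n}\|\theta\|^2\le C_q\int_{I_n}\|\theta\|^2$, the implicit Case~2 contribution is absorbed into the coercive term $\alpha\int_{I_n}\|\theta\|^2$, and the accumulated estimate becomes a discrete Gr\"onwall recursion in $E_n$ with per-step factor $1+Ck_n$ and forcing $R_n$ (the interpolation residuals carried into slab $n$). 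Because each fixed slab $I_{n'}$ (resp. each initial subinterval $J_i$) is reached from at most $m(n')\le N-1$ (resp. $m(i)\le N-1$) later slabs, summation attaches exactly these multiplicities to the temporal and initial interpolation residuals, whereas the nodal spatial errors $\|u(t_n)\|_{r+1}$ — coming from $\theta^{0,-}$ and from the boundary terms of the time integration by parts — are counted once. Applying discrete Gr\"onwall (constant depending on $T,\tau_{max},\alpha,C_G,q$ but not on $k_n$ or $h$) bounds $\|\theta^{N,-}\|^2$, and the triangle inequality $\|u_h(t_N)-u(t_N)\|\le\|\theta^{N,-}\|+\|\eta^{N,-}\|$ together with the nodal interpolation estimate gives the asserted bound.

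The main obstacle I anticipate is precisely this last stage: organizing the double sum over slabs so that the interpolation residuals are reproduced with the multiplicity weights $m(n)$, $m(i)$ and no worse, and handling the implicit Case~2 — where the delay window reaches the current space--time slab, so that the corresponding term must be absorbed into the left-hand side rather than treated as known data, which is where the inverse estimate on $\mathcal P_q$ and the choice of $\epsilon$ relative to $\alpha$ are used. The remaining steps — the dG coercivity identity, the orthogonality-based estimates of $B(\eta,\theta)$, and the $L^2$-Lipschitz bound on $\hat G$ — are routine given the ingredients assembled above.
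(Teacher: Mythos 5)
Your overall architecture is the same as the paper's: split the error into a discretization part $\theta$ and an interpolation part measured against $P_hT_ku$, use Galerkin orthogonality and the dG energy identity with test function $\theta$, kill the $\int(\partial_t\rho,\theta)$ term via the temporal orthogonality \eqref{interpolant} and the $L^2$-projection, and close with a Gr\"onwall argument whose forcing carries the multiplicities $m(n)$, $m(i)$ from revisiting earlier slabs through the delay window. The one place where your argument has a genuine gap is the central delay estimate. You claim
\[
\bigl\|\hat G(\psi_1)-\hat G(\psi_2)\bigr\|_{L^2(\Omega)}\le C_G\sup_{s\in[-\tau_{max},0]}\|\psi_1(s)-\psi_2(s)\|_{L^2(\Omega)},
\]
but this is false in general for a space-dependent delay. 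Writing $\delta=\psi_1-\psi_2$, Cauchy--Schwarz only gives $\|\hat G(\psi_1)-\hat G(\psi_2)\|_{L^2}\le C\bigl(\int_\Omega\sup_s|\delta(s,r)|^2\,dr\bigr)^{1/2}$, i.e.\ the mixed norm $L^2_rL^\infty_s$, which strictly dominates $\sup_s\|\delta(s,\cdot)\|_{L^2}$. Concretely, take $\Omega=(0,1)$, $\tau(x,r)=r\,\tau_{max}$, and let $\delta(s,r)$ be a bump of height $1$ and width $\epsilon$ in $s$ centered at $s=-r\tau_{max}$: then $\sup_s\|\delta(s,\cdot)\|_{L^2}\sim\sqrt{\epsilon}\to0$ while $\delta(-\tau(x,r),r)\equiv1$, so $\|\hat G(\psi_1)-\hat G(\psi_2)\|_{L^2}$ stays bounded away from zero. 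No constant $C_G$ exists, so the pointwise-in-$t$ recursion you build on top of this bound does not get off the ground as written.

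The fix is exactly what the paper does in \eqref{nonlinear_estimate}--\eqref{key_idea}: do not take a supremum over the delay window, but keep the full space--time integral, apply Cauchy--Schwarz, and then integrate in $t$ so that, for each fixed $(x,r)$, the substitution $s\mapsto s-\tau(x,r)$ maps $I_t$ into $[t_{n-1}-\tau_{max},t]$, yielding $\int_{I_t}\!\int_\Omega\theta^2(s-\tau(x,r),r)\,dr\,ds\le\int_{t_{n-1}-\tau_{max}}^{t}\|\theta(s)\|^2ds$ up to volume factors. This replaces your running maximum $E_n$ by the integrals $\int_{I_m}\|\theta\|^2ds$ over earlier slabs, after which a Gr\"onwall inequality (the paper uses the continuous one slab-by-slab plus an explicit recursion; your discrete version would also work once restated in terms of these integrals, using $\int_{I_m}\|\theta\|^2\le k_m\sup_{I_m}\|\theta\|^2$ and the equivalence of norms on $\mathcal P_q$ for the implicit Case~2 absorption) produces the stated bound with the same multiplicity bookkeeping you describe. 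The rest of your outline --- the treatment of $B(\eta,\theta)$, the nodal spatial errors, and the origin of $m(n)$ and $m(i)$ --- is consistent with the paper's proof.
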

\begin{proof}
Let us decompose the error of the numerical discretization into the sum 
\begin{align}
(u_h-u)(t,x)&=\left[u_h(t,x)-P_h\tilde u(t)(x)\right]+\left[P_h\tilde u(t)(x)-u(t,x)\right]\nonumber\\[5pt]
&=\theta(t,x)+\rho(t,x) \qquad \text{for } t>0,  
\end{align} 
with $\theta$ the discretization error and $\rho$ the interpolation error. 
When $t\in[-\tau_{max},0],$ we only have the interpolation error of the given initial solution 
$u_0,$ that is, $\theta(t,x)=0$ and $\rho(t,x)=P_h\tilde u_0(t)(x)-u_0(t,x).$ 
From here on, we suppress the spatial dependence where it is clear from the context. 
Since $\tilde u$ interpolates exactly at the nodes $t=t_{n-1},$ we have that 
\begin{align}\label{interpolant_space}
\Vert\rho(t_{n-1})\Vert&=\Vert P_h T_k u(t_{n-1})-u(t_{n-1})\Vert\nonumber\\
&=\Vert P_h u(t_{n-1})-u(t_{n-1})\Vert\leq C  h^{r+1}\| u(t_{n-1})\|_{r+1}
\end{align} 
holds for all $n\geq 1.$ Here the constant $C$ is independent of $ h,$ see e.g. \cite{Brenner}. 
When we are in the interior of a time 
interval $I_j$, we decompose $\rho$ to be able to use the bound on the interpolation error in time and space, respectively, as in \eqref{interpolation_estimate_psi2}
\begin{align}\label{rho-interpolant}
\Vert\rho(t)\Vert &=\| P_h T_k u(t)-u(t)\| \nonumber\\[5pt]
&\leq C\left(  h^{r+1} \|u(t)\|_{r+1} +k_j^{q+1/2}\Bigl(\int_{I_j}\Vert \pd{s}{q+1}u(s,\cdot)\Vert^2 ds
\Bigr)^{1/2}\right), 
\end{align}
for any $t\in I_j$ and $j=1,\dots,N.$
It is, therefore, sufficient to bound $\theta^N=\theta(t_N).$ Since both $u_h$ and $u$
satisfy the weak formulation (\ref{weak-form}) with $G$ and $\hat G,$ respectively, 
we obtain that for all $v\in V_h^n$ 
\begin{align}\label{weak-form-error}
\int_{I_n}&\left(\ppf{}{t}\theta(t)+\alpha\theta(t),v(t)\right) dt+\left([\theta]_{n-1},v^{n-1,+}\right)\nonumber\\
&=\int_{I_n}\left(-\ppf{}{t}\rho(t)-\alpha\rho(t)+\hat{G}({u_h}_t)-G(u_t),v(t)\right) dt-\left([\rho]_{n-1},v^{n-1,+}\right).
\end{align}
The variational equation (\ref{weak-form-error}) holds for any partition of the time interval $I_n$, 
hence the following equation is also valid for any $t\in (t_{n-1},t_n]$
\begin{align}\label{weak-form-error-t}
\int_{I_t}&\left(\ppf{}{s}\theta(s)+\alpha\theta(s),v(s)\right) ds+\left([\theta]_{n-1},v^{n-1,+}\right)\nonumber\\
&=\int_{I_t}\left(-\ppf{}{s}\rho(s)-\alpha\rho(s)+\hat{G}({u_h}_s)-G(u_s),v(s)\right) ds-\left([\rho]_{n-1},v^{n-1,+}\right),
\end{align}
where $I_t=(t_{n-1},t].$ Using the assumptions on the interpolant, some terms in (\ref{weak-form-error-t}) will cancel, i.e., 
for all $t\in I_n$
\begin{align}
\int_{I_t}&\left(\ppf{}{s}\rho(s),v(s)\right)ds+\left(\rho(t_{n-1}^+)-\rho(t_{n-1}^-),v^{n-1,+}\right)\nonumber\\
&=-\int_{I_t}\left(\rho(s), \ppf{}{s}v(s)\right)ds+ \left(\rho(s), v(s)\right)|_{t=t_{n-1}^+}^{t}
+\left(\rho^{n-1,+}-\rho^{n-1,-},v^{n-1,+}\right)\nonumber\\
&= -\int_{I_t}\left(\rho(s),\ppf{}{s}v(s)\right)ds+\left(\rho(t),v(t)\right)-\left(\rho^{n-1,-},v^{n-1,+}\right)\nonumber\\[5pt]
&=\left(\rho(t),v(t)\right)-\left(\rho^{n-1,-},v^{n-1,+}\right).
\end{align}
Let $v=2\theta\in S_k^n$ in (\ref{weak-form-error-t}). Then for each $I_n$ and $t\in I_n$ the following holds 
\begin{align}
\int_{I_t}&2\left(\ppf{}{s}\theta(s)+\alpha\theta(s),\theta(s)\right) ds+2\left([\theta]_{n-1},\theta^{n-1,+}\right)
\nonumber\\
&=\int_{I_t}2\left(-\alpha\rho(s)+\hat{G}({u_h}_s)-G(u_s),\theta(s)\right) ds -2\left(\rho(t),\theta(t)\right)+2\left(\rho^{n-1,-},\theta^{n-1,+}\right) .
\end{align}
This may be further written as 
\begin{align}\label{weak-form-error-test}
\int_{I_t}&\Bigl[\frac{d}{ds}\Vert\theta(s)\Vert^2 +2\alpha\Vert\theta(s)\Vert^2\Bigr] ds
+2\Vert\theta^{n-1,+}\Vert^2=2\left(\theta^{n-1,-},\theta^{n-1,+}\right)
\nonumber\\
+&\int_{I_t}2\left(-\alpha\rho(s)+\hat{G}({u_h}_s)-G(u_s),\theta(s)\right) ds -2\left(\rho(t),\theta(t)\right)+2\left(\rho^{n-1,-},\theta^{n-1,+}\right) .
\end{align}
Using the Schwarz inequality and the inequality $2ab\leq \epsilon^2a^2+\frac{1}{\epsilon^2}b^2$ we 
obtain
\begin{align}\label{weak-form-error-1}
(1-\epsilon^2)\Vert\theta(t)\Vert^2\leq  & -2\alpha\int_{I_t}\Vert\theta(s)\Vert^2 ds 
+2 \Vert\theta^{n-1,-}\Vert^2\nonumber\\
& +\alpha  \int_{I_t} \left(\Vert\rho(s)\Vert^2+\Vert\theta(s)\Vert^2\right)ds 
+\frac{1}{\epsilon^2}\|\rho(t)\|^2 \nonumber\\[5pt]
&+ 2\int_{I_t} \left(\hat{G}({u_h}_s)-G(u_s),\theta(s)\right) ds+ 2\|\rho^{n-1,-}\|^2.
\end{align}
Since the nonlinearity $S$ is Lipschitz continuous with some Lipschitz constant $C_S,$ we can estimate the 
nonlinear term as
\begin{align}\label{nonlinear_estimate}
&2\int_{I_t}\left( \hat{G}({u_h}_s)-G(u_s), \theta(s)\right)ds\nonumber\\
&=2\int_{I_t}\int_\Omega\left[\int_\Omega J(x,r)\left[S\left(u_h\left(s-\tau(x,r),r\right)\right)-S\left(u\left(s-\tau(x,r),r\right)\right)\right]dr \right]\theta(s,x)dx\, ds\nonumber\\
&\leq 2C_S \int_{I_t}\int_\Omega \left[\int_\Omega |J(x,r)|
\left(|\theta\left(s-\tau(x,r),r\right)|+|\rho\left(s-\tau(x,r),r\right)|\right)dr\right]\theta(s,x)dx\, ds.
\end{align}
Let us estimate the first term on the right hand side of (\ref{nonlinear_estimate}) as
\begin{align}\label{critical_term1}
T_1:=&\int_{I_t}\int_\Omega\left[\int_\Omega |J(x,r)||\theta\left(s-\tau(x,r),r\right)|dr \right]\theta(s,x)dx\, ds\nonumber\\[5pt]
&\leq \int_{I_t}\left(\int_\Omega\left(\int_\Omega |J(x,r)||\theta\left(s-\tau(x,r),r\right)|dr\right)^2 dx\right)^{1/2}
\left(\int_\Omega \theta^2(s,x)dx\right)^{1/2} ds\nonumber\\[5pt]
&\leq \int_{I_t}\left(|\Omega|\int_\Omega\int_\Omega J^2(x,r)\theta^2\left(s-\tau(x,r),r\right)dr dx\right)^{1/2}
\left(\int_\Omega \theta^2(s,x)dx\right)^{1/2} ds\nonumber\\[5pt]
&\leq \left(|\Omega|\int_{I_t}\int_\Omega\int_\Omega J^2(x,r)\theta^2\left(s-\tau(x,r),r\right)dr dx ds\right)^{1/2}
\left(\int_{I_t}\int_\Omega \theta^2(s,x)dx ds\right)^{1/2}
\end{align}
where we used the Schwarz inequality in each estimation step and $|\Omega|=\text{vol}(\Omega)$. 
Next, since $0<\tau(x,r)\leq \tau_{max},$ and $J(x,r)\leq \Vert J\Vert_C,$ for all $(x,r)\in\bar\Omega\times\bar\Omega,$ 
the following estimate is valid  
\begin{align}\label{key_idea}
\int_{I_t}\int_\Omega\int_\Omega &J^2(x,r)\theta^2(s-\tau(x,r),r)dr \,dx\, ds \nonumber\\[5pt]
&\leq \Vert J\Vert_C^2 \int_{I_t}\int_\Omega\int_\Omega\theta^2(s-\tau(x,r),r)dr\, dx \,ds \nonumber\\[5pt]
&\leq \Vert J\Vert_C^2 |\Omega|\int_{t_{n-1}-\tau_{max}}^{t}\int_\Omega\theta^2(s,r)dr\,ds
\end{align}
Hence we can further estimate (\ref{critical_term1}) as
\begin{align}\label{critical_term2}
&T_1\leq  \|J\|_C |\Omega| \left(\int_{t_{n-1}-\tau_{max}}^{t} \|\theta(s)\|^2 ds\right)^{1/2} \left(\int_{I_t} \|\theta(s)\|^2 ds\right)^{1/2}\nonumber\\[5pt]
&\leq \|J\|_C |\Omega| \int_{t_{n-1}-\tau_{max}}^{t} \|\theta(s)\|^2 ds= \|J\|_C |\Omega|\Bigl( \int_{t_{n-1}-\tau_{max}}^{t_{n-1}} \|\theta(s)\|^2 ds + \int_{I_t}\|\theta(s)\|^2 ds\Bigr) .
\end{align}
Similarly as in (\ref{critical_term1}) and (\ref{key_idea}), for the last term in (\ref{nonlinear_estimate}) we obtain
\begin{align}
T_2:=&2\int_{I_t}\int_\Omega\left[\int_\Omega |J(x,r)||\rho\left(s-\tau(x,r),r\right)|dr \right]\theta(s,x)dx\, ds\nonumber\\[5pt]
&\leq 2 \|J\|_C |\Omega|
\left(\int_{t_{n-1}-\tau_{max}}^{t}\Vert \rho(s)\Vert^2 ds \right)^{1/2}
\left(\int_{I_t}\Vert \theta(s)\Vert^2 ds\right)^{1/2}\nonumber\\[5pt]
&\leq  \|J\|_C^2 |\Omega|^2 \int_{t_{n-1}-\tau_{max}}^{t_n}\| \rho(s)\|^2 ds
+\int_{I_t}\|\theta(s)\|^2 ds.
\end{align}
After introducing the above estimates into (\ref{weak-form-error-1}) we obtain that for all $t\in I_n,$ 
\begin{align}\label{applyG_lemma}
(1-\epsilon^2)\Vert\theta(t)\Vert^2\leq  & (C_S-\alpha+2C_S\Vert J\Vert_C |\Omega|)\int_{I_t}\Vert\theta(s)\Vert^2 ds 
+ 2\Vert\theta^{n-1}\Vert^2\nonumber\\
& +\alpha  \int_{I_n} \Vert\rho(s)\Vert^2 ds +2 C_S\|J\|_C |\Omega|\int_{t_{n-1}-\tau_{max}}^{t_{n-1}} \|\theta(s)\|^2 ds \nonumber\\
&+ C_S \|J\|_C^2 |\Omega|^2 \int_{t_{n-1}-\tau_{max}}^{t_n}\| \rho(s)\|^2 ds +\frac{1}{\epsilon^2}\|\rho(t)\|^2+ 2\|\rho^{n-1}\|^2
\end{align}
is valid for all $n\geq 1.$ Divide by $1-\epsilon^2,$ where $0<\epsilon<1,$ and denote by 
\begin{align*} 
&\beta=\frac{|\,C_S-\alpha+2C_S\Vert J\Vert_C|\Omega|\ |}{1-\epsilon^2}>0\nonumber \\[5pt]
&\omega_n(t)=\gamma_n+\frac{1}{\epsilon^2(1-\epsilon^2)}\|\rho(t)\|^2\\[5pt]
&\gamma_n=\frac{1}{1-\epsilon^2}\left(2\Vert\theta^{n-1}\Vert^2
+\alpha  \int_{I_n} \Vert\rho(s)\Vert^2 ds +2 C_S\|J\|_C |\Omega|\int_{t_{n-1}-\tau_{max}}^{t_{n-1}} \|\theta(s)\|^2 ds\right. \nonumber\\[5pt]
&\quad \left.+ C_S \|J\|_C^2 |\Omega|^2 \int_{t_{n-1}-\tau_{max}}^{t_n}\| \rho(s)\|^2 ds +2\|\rho^{n-1}\|^2\right),\quad n\geq 1.
\end{align*}
Then, inequality (\ref{applyG_lemma}) can be written as
\begin{equation}\label{applyG_lemma1}
\eta(t)\leq \omega_n(t)+\beta\int_{I_t}\eta(s)ds, \ t\in I_n,
\end{equation}
where $\eta(t)=\Vert\theta(t)\Vert^2.$ Apply Gr\" onwall's inequality to (\ref{applyG_lemma1}) to obtain
\begin{equation}\label{G_lemma0}
\eta(t)\leq \omega_n(t)+\beta\int_{I_t}\omega_n(s)e^{\beta(t-s)}ds, \ t\in I_n.
\end{equation}
When $t=t_n,$ 
\begin{equation}\label{G_lemma}
\eta(t_n)\leq \omega_n(t_n)+\beta\int_{I_n}\omega_n(s)e^{\beta(t_n-s)}ds, 
\end{equation}
where 
\begin{align}
\omega_n(t_n)&=\gamma_n+\frac{1}{\epsilon^2(1-\epsilon^2)}\Vert\rho(t_n)\Vert^2,\quad n\geq 1.
\end{align}
Note that the only time-dependent term in $\omega_n(t)$ is $\|\rho(t)\|^2.$ 
Hence, the integral term in (\ref{G_lemma}) can be estimated as
\begin{align}\label{integral_bound}
\int_{I_n}\omega_n(s)e^{\beta(t_n-s)}ds&=\int_{I_n}\left(\gamma_n+\frac{1}{\epsilon^2(1-\epsilon^2)}\Vert\rho(s)\Vert^2\right)e^{\beta(t_n-s)}ds\nonumber\\[5pt]
&\leq e^{\beta k_n}\left( k_n\gamma_n+\frac{1}{\epsilon^2(1-\epsilon^2)}\int_{I_n}\Vert \rho(s)\Vert^2 ds\right).
\end{align}
Therefore, we obtain for $n\geq 1$ that
\begin{align}\label{G_lemma1}
\eta(t_n)&\leq \left(1+\beta k_n e^{\beta k_n}\right)\gamma_n+\frac{\beta e^{\beta k_n} }{\epsilon^2(1-\epsilon^2)}\int_{I_n}\Vert \rho(s)\Vert^2 ds+\frac{1}{\epsilon^2(1-\epsilon^2)}\Vert \rho^n\Vert^2.
\end{align}
Let us recall that 
\begin{align}
\gamma_n&=\frac{1}{1-\epsilon^2}\left(2\eta(t_{n-1})
 +2 C_S\|J\|_C |\Omega|\int_{t_{n-1}-\tau_{max}}^{t_{n-1}} \eta(s) ds+\alpha  \int_{I_n} \Vert\rho(s)\Vert^2 ds\right. \nonumber\\[5pt]
&\qquad\qquad  \left.+  C_S\|J\|_C^2 |\Omega|^2 \int_{t_{n-1}-\tau_{max}}^{t_n}\| \rho(s)\|^2 ds+2\|\rho^{n-1}\|^2 \right)
\end{align}
and observe that the right hand side of \eqref{G_lemma1} can be estimated by the bound of the 
interpolation error and the bound of the integral of $\eta(t)$ over earlier time intervals, i.e., for $t\leq t_{n-1}.$ Hence we can write
\begin{align}\label{recursion_eta}
\eta(t_n)&\leq C_1\eta(t_{n-1})+C_2\int_{t_{n-1}-\tau_{max}}^{t_{n-1}} \eta(s) ds\nonumber\\
&+ C_3\int_{t_{n-1}-\tau_{max}}^{t_n}\| \rho(s)\|^2 ds+C_4\|\rho^{n-1}\|^2 +\frac{1}{\epsilon^2(1-\epsilon^2)}\|\rho^{n}\|^2, 
\end{align}
where $C_i,$ $i=1,\dots,4$ depend on the parameters $\alpha, \beta, \epsilon, \Vert J\Vert_C,$ 
$|\Omega|$ and $k_n,$ such that $C_i=O(1)$ as $k_n\to 0.$
 
By integrating (\ref{G_lemma0}) we obtain the following general formula
\begin{align}\label{general_formula}
\int_{t_{n-1}-\tau_{max}}^{t_{n-1}}\eta(s)ds&\leq \sum_{j=m(n)}^{n-1}\int_{I_j}\eta(s)ds\nonumber\\
&\leq\sum_{j=m(n)}^{n-1}\int_{I_j}\left(\omega_j(s)+\beta\int_{I_s}\omega_j(\tau)e^{\beta(s-\tau)}d\tau\right)ds\nonumber\\
&\leq\sum_{j=m(n)}^{n-1}\int_{I_j}\left(\omega_j(s)+\beta\int_{I_j}\omega_j(\tau)e^{\beta(t_j-\tau)}d\tau\right)ds\nonumber\\
&\leq\sum_{j=m(n)}^{n-1}\left(1+k_j \beta e^{\beta k_j}\right)\int_{I_j} \omega_j(s)ds\nonumber\\
&\leq\sum_{j=m(n)}^{n-1}\left(1+k_j \beta e^{\beta k_j}\right)\left(k_j\gamma_j+\frac{1}{\epsilon^2(1-\epsilon^2)}\int_{I_j} \Vert\rho(s)\Vert^2ds\right),
\end{align}
where we used that $\eta(s)=\Vert\theta(s)\Vert^2=0$ for $s\in[-\tau_{max},0]$ and 
\eqref{integral_bound} in the last inequality. Here $m=m(n)\leq n-1$ 
is the index of the interval $I_m$ for which $t_{n-1}-\tau_{max}\in I_m.$  

As we can see, the integral of $\eta(s)$ can be bounded by the integral of $\Vert\rho(s)\Vert,$ hence in 
\eqref{recursion_eta} we have
\begin{align}
  \eta(t_n)&\leq C\sum_{j=m(n)}^{n-1}\left[\left(1+k_j \beta e^{\beta k_j}\right)k_j\gamma_j+ 
  \left(\frac{1+k_j \beta e^{\beta k_j}}{\epsilon^2(1-\epsilon^2)}+1\right)\int_{I_j}\Vert\rho(s)\Vert^2 ds\right]\nonumber\\[5pt]
&+C_3\int_{I_n}\Vert\rho(s)\Vert^2 ds+C_4\|\rho^{n-1}\|^2 +\frac{1}{\epsilon^2(1-\epsilon^2)}\|\rho^{n}\|^2+C_1\eta(t_{n-1}). 
\end{align}
We can use \eqref{rho-interpolant} to bound the integral of 
$\Vert\rho(s)\Vert$ as follows
\begin{align}\label{rho_past}
\int_{I_j}\| \rho(s)\|^2 ds= C\left[ h^{2r+2}k_j \|u\|^2_{r+1,I_j} +k_j^{2q+2}\int_{I_j}
\Vert \pd{s}{q+1}u(s,\cdot)\Vert^2 ds\right]
\end{align}
For $n=1,$ combining \eqref{recursion_eta} with \eqref{interpolation_estimate_psi2}, \eqref{interpolation_estimate_psi_int}, \eqref{interpolant_space}, \eqref{rho-interpolant} and 
(\ref{rho_past}) and using that $\eta(0)=0,$ we find that there exists a generic constant $C$, 
independent of the time step $k_1$ and the spatial mesh size $h,$ such that
\begin{align}\label{first_iterate}
\eta(t_1)&\leq C_3\int_{-\tau_{max}}^{t_1}\Vert \rho(s)\Vert^2+C_4\Vert\rho^0\Vert^2+\frac{1}{\epsilon^2(1-\epsilon^2)}\|\rho^{1}\|^2\nonumber\\
&\leq C\left[\sum_{i=1}^M \mathcal{B}(u_0,J_i)+ h^{2r+2}\left(\|u(0)\|^2_{r+1}+\|u(t_1)\|^2_{r+1}\right)\right.\nonumber\\
&\left.+   h^{2r+2}k_1 \|u\|^2_{r+1,I_1} +k_1^{2q+2}\int_{I_1}\Vert \pd{s}{q+1}u(s,\cdot)\Vert^2 ds\right].
\end{align} 
For $n=2,$ using again \eqref{recursion_eta} and then \eqref{interpolant_space}, \eqref{rho-interpolant}, \eqref{general_formula} and \eqref{first_iterate}, we 
find that there is a constant $C$, such that 
\begin{align}
\eta(t_2)&\leq C_1\eta(t_1)+C_2\int_{t_1-\tau_{max}}^{t_1}\eta(s)ds +C_3\int_{t_1-\tau_{max}}^{t_2}\Vert \rho(s)\Vert^2\nonumber\\[5pt]
&+C_4\Vert\rho^1\Vert^2+\frac{1}{\epsilon^2(1-\epsilon^2)}\|\rho^{2}\|^2\nonumber\\
&\leq C\left[\sum_{i=1}^M m(i)\mathcal{B}(u_0,J_i) + h^{2r+2}\left(\|u(0)\|^2_{r+1}+\|u(t_1)\|_{r+1}^2
+\|u(t_2)\|^2_{r+1}\right)\right.\nonumber\\
&\left.+ h^{2r+2}\sum_{j=1}^2 m(j) k_j \|u\|^2_{r+1,I_j} +\sum_{j=1}^2 m(j)k_j^{2q+2}\int_{I_j}
\Vert \pd{s}{q+1}u(s,\cdot)\Vert^2 ds\right],
\end{align}
where $m(i)$ and $m(j)$ are the multiplicity how many times we visited the interval $J_i$ and $I_j,$ respectively, 
in the integral of $\Vert\rho(s)\Vert$ over the delay interval. If $\tau_{max}$ is large compared to the time step, 
then $m(i)$ is consequently also larger.  

We can repeat this procedure for the subsequent time intervals, which completes the proof of the theorem.
\end{proof}
\section{Numerical simulations}\label{s:simulations}
In this section we present applications of the FEM discretization to the neural field equations, 
starting with delay differential equations with constant delay. 
\subsection{DDE with constant delay}
Here we study the numerical solution of equations of the form
\begin{align}\label{test1}
\dot{u}(t)&=f\left(u(t),u(t-\tau)\right),\nonumber\\
u(s)&=u_0(s),\quad s\in[-\tau,0],
\end{align}
with $\tau>0$ a constant delay and $f:\R^2\to \R$ linear, given by 
$f\left(u(t),u(t-\tau)\right)=-\alpha u(t)+u(t-\tau).$ 

To verify  our results on the error analysis, we compare the time-discontinuous Galerkin FEM 
solution (dG(1)) using linear basis functions, with the exact solution computed for some delay intervals. Let the history function be 
$u_0(s)=-s,$ $s\in[-\tau,0
]$ and $\tau=2.$ Figure \ref{fig:constant_delay} illustrates the solution when $\alpha=1,$ 
for which we know that it converges to a non-zero steady 
state. We set $k_n=k$ for all $n$ and distinguish two cases. First, when $\tau/k$ is not 
an integer, then the 
dG(1) method is second order accurate, which is consistent with our result on the error estimate. 
When $\tau/k$ is, however, integer then we observe 
a higher order accuracy of order three. Figure \ref{fig:constant_delay} 
shows both cases.
\begin{figure}
\centering
\subfigure[]{\epsfig{file = 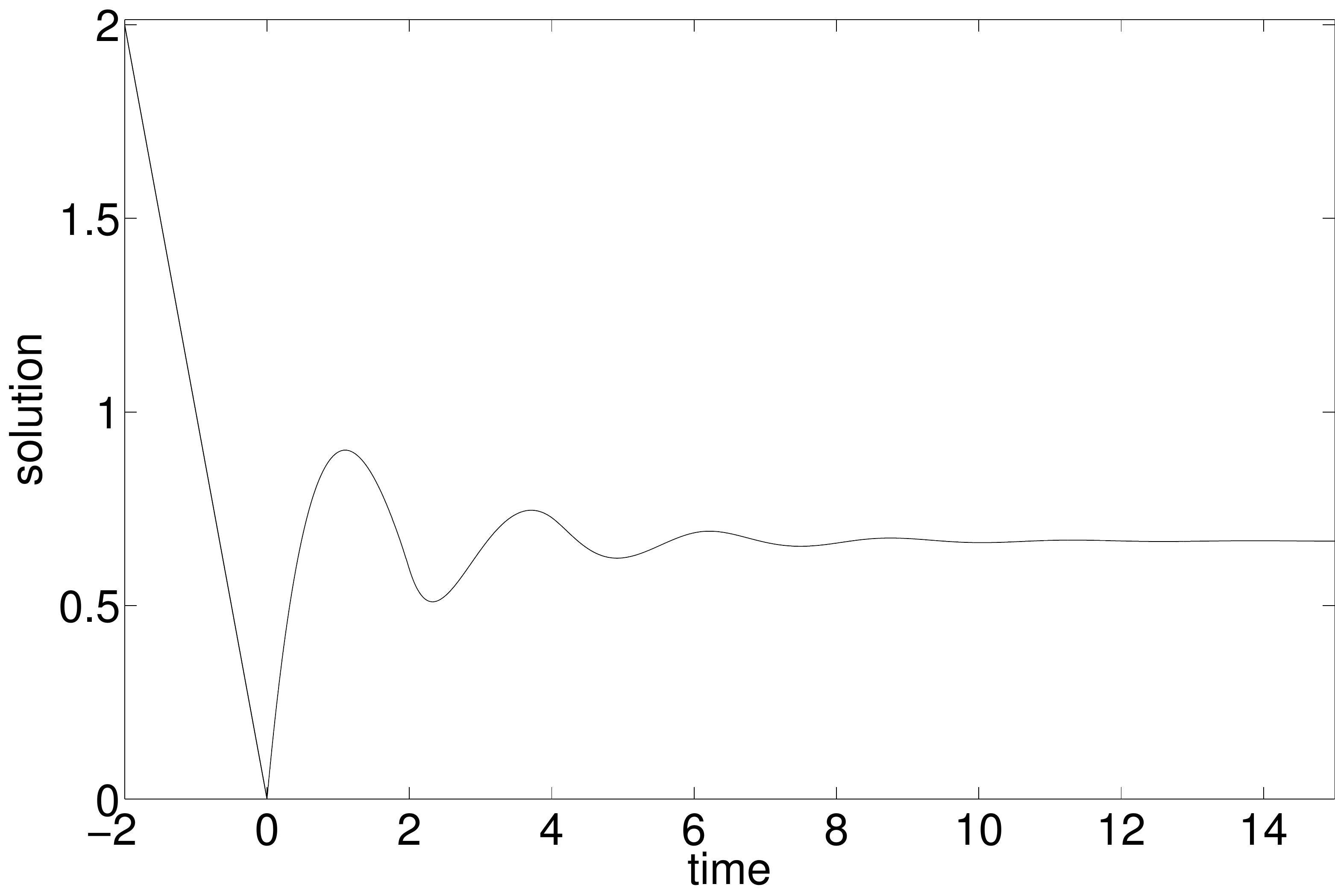, width = 0.456\textwidth}}
\hspace{\fill}
  \subfigure[]{\epsfig{file = 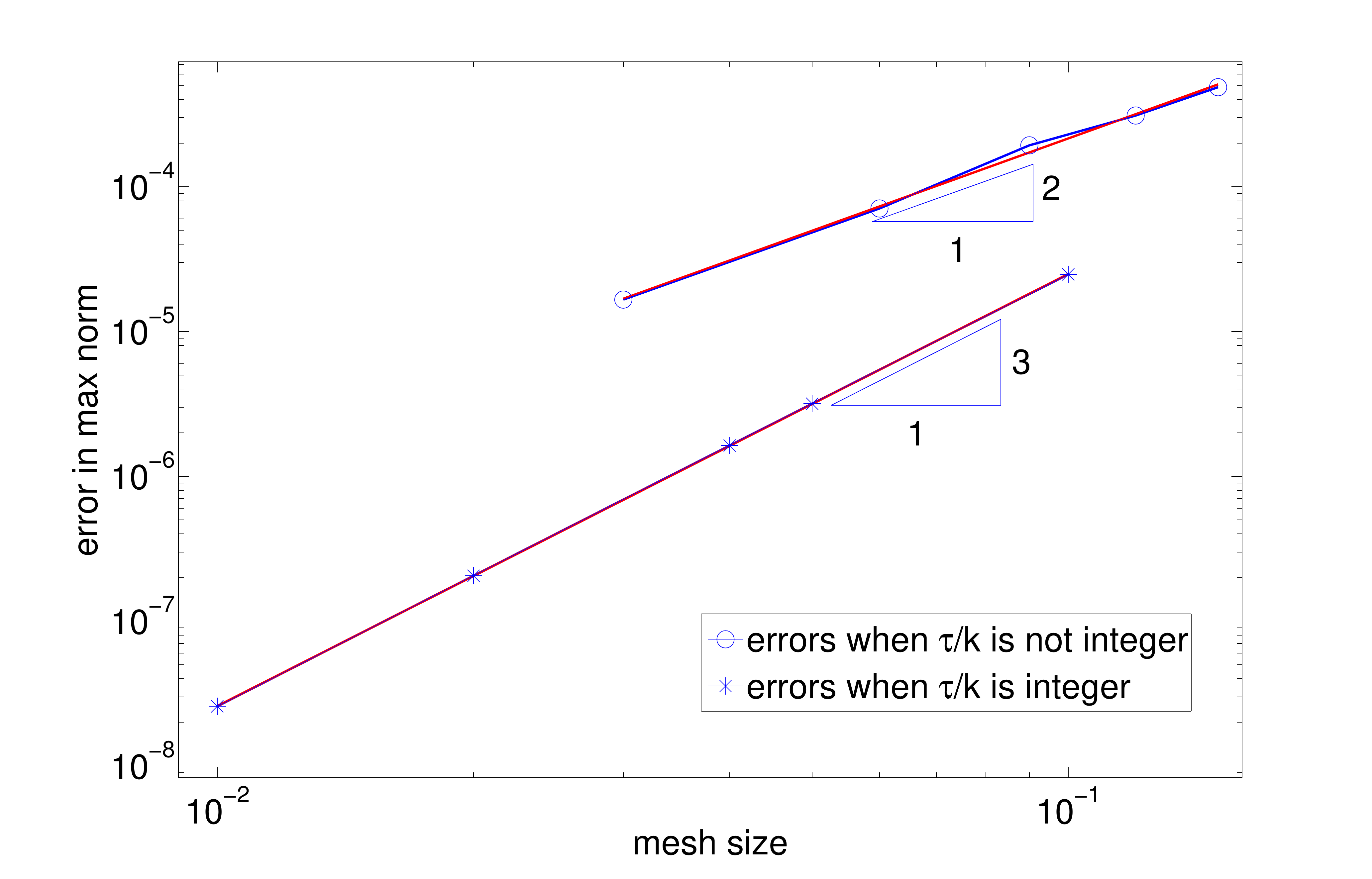, width = 0.52\textwidth}}
\caption{(a) dG(1) solution of (\ref{test1}) for $\alpha=1.$ (b) Discretization error in a log-log plot when $\tau/k$ is not an integer and when it is an integer.} \label{fig:constant_delay}
\end{figure}
 
The numerical integration of delay differential equations is very sensitive to jump discontinuities in the 
solution or in its derivatives. Such discontinuity points are referred in the literature as breaking points, \cite{Bellen}. In
case of constant delay, the breaking points are $\xi_n=n\tau$ for $n=1,2,\dots .$ The best procedure to guarantee the required accuracy is to include these breaking points in the set of mesh points. In our example, the 
derivative of the solution has discontinuity at $t=0.$ When the breaking points are also mesh points, i.e., 
when $\tau/k$ is integer then the error in the discontinuous Galerkin method is of order $O(k^{q+2}),$ which 
is of superconvergent order.
\subsection{Integro-differential equations}
One important result is the successful treatment of the fully implicit case, i.e., when the delay is zero. 
Hence, consider the integro-differential equation, obtained by removing the delay term in (\ref{NF-1pop_model}) and adding 
a given, sufficiently smooth, source term $g$
\begin{equation}\label{integro-diff-eq}
\frac{\partial u}{\partial t}(t,x)+\alpha u(t,x)=\int_\Omega J(x,r)S\left(u(t,r)\right)d\, r+g(t,x),
\end{equation}
with initial condition $u(0,x)=u_0(x).$ In our numerical 
simulation, we further simplify this equation by taking $J(x,r)=1,$ and $S\left(u(t,r)\right)=u(t,r)$ linear. 

As a first example, we take $g=0,$ $\alpha=1$ and $u_0(x)=x.$ The exact solution of (\ref{integro-diff-eq}) 
is $u(t,x)=x e^{-t},$ which converges to zero as $t\to\infty$ for every $x\in\Omega.$ The time interval is divided equidistantly with time step $k_n=k.$ 
With this example we want to demonstrate that the time accuracy is not 
destroyed when we add a spatial integral term. The dGcG-FEM solution using linear basis functions, both in space and time,
 and the time accuracy for this example are 
plotted in Figure \ref{fig:integral_eq}. We observe that the error in the dGcG-FEM method is of superconvergent order. 
\begin{figure}
\centering
\subfigure[]{\epsfig{file = 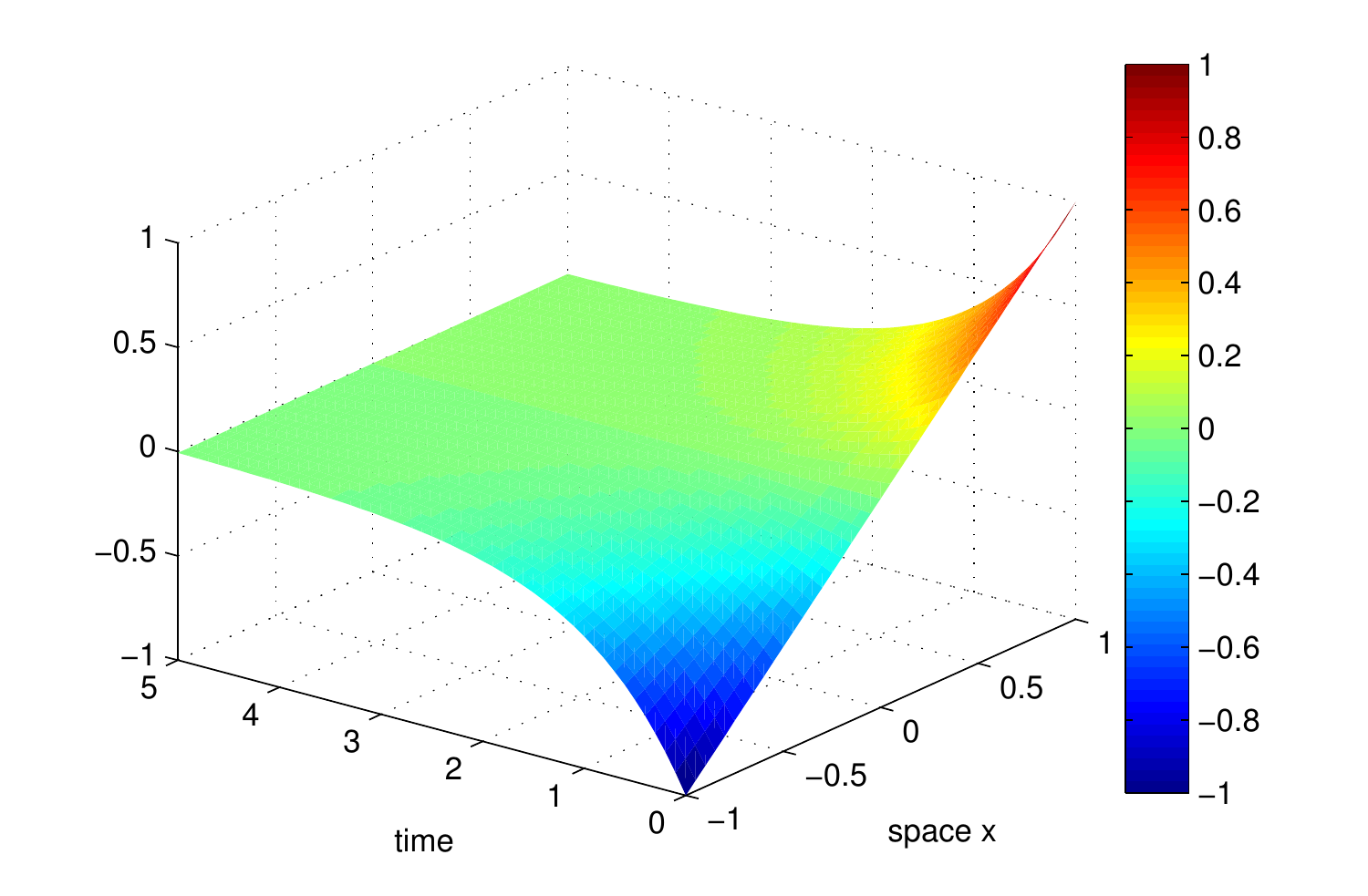, width = 0.51\textwidth}}
\hspace{\fill}
  \subfigure[]{\epsfig{file = 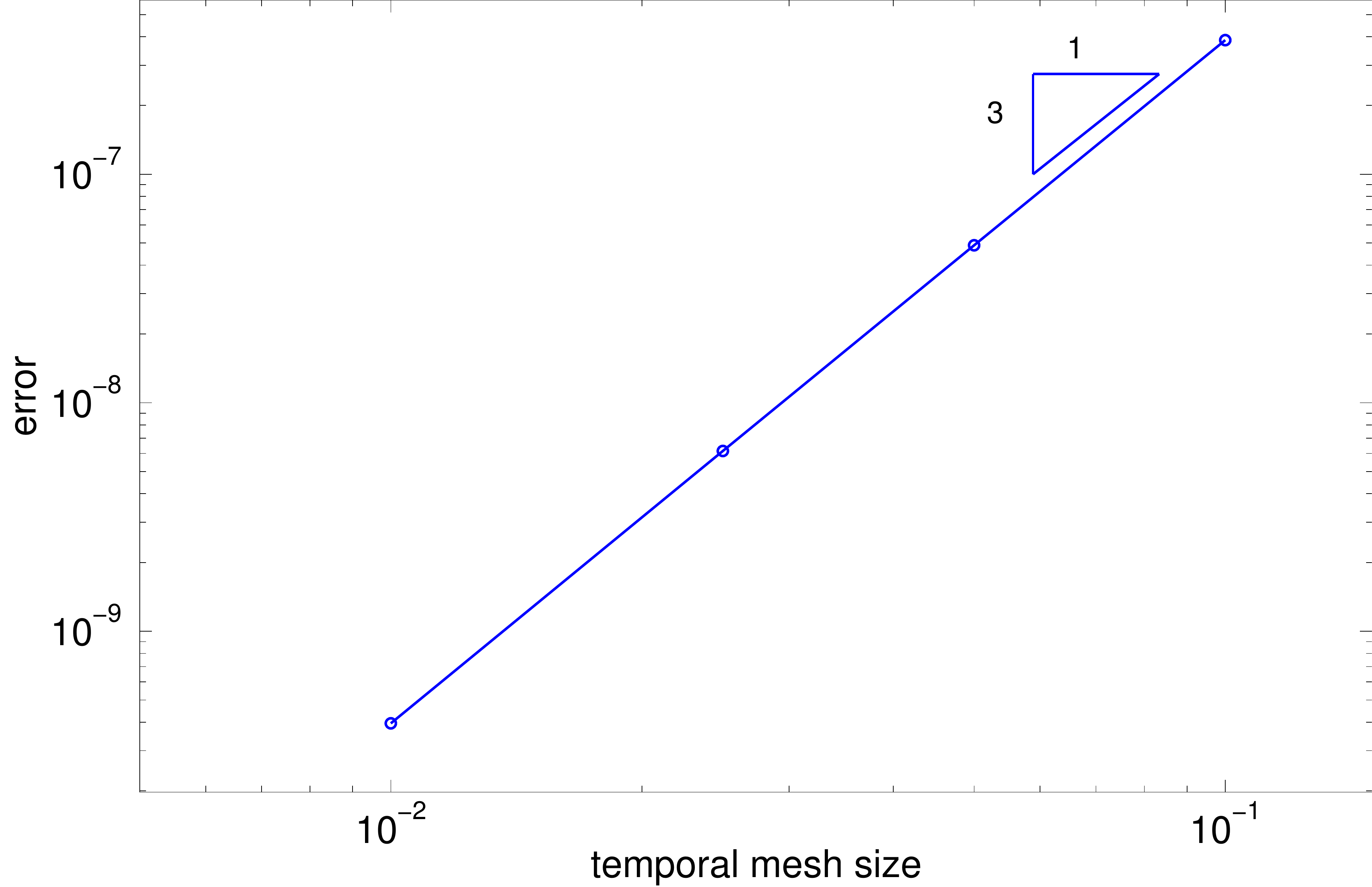, width = 0.47\textwidth}}
\caption{(a) dGcG(1) solution of (\ref{integro-diff-eq}) for $\alpha=1$ and $g=0.$ (b) Discretization error 
in a log-log plot.} \label{fig:integral_eq}
\end{figure}

In the second example, we study the time accuracy when the solution of (\ref{integro-diff-eq}) is periodic in time. 
Take $g(t,x)=x\cos t+\alpha x \sin t,$ $\Omega=[-1,1].$ Then the exact solution $u(t,x)=x\sin t$ satisfies the 
initial condition $u(0,x)=0.$ We compute the error of the solution at several time levels in a 
period and observe the same temporal accuracy as in the first example. 
\subsection{The neural field equations}
In this section we demonstrate the dGcG(1) method for an example analyzed in \cite{vg}, both analytically 
and numerically. Consider the single population model (\ref{NF-1pop_model}), when the space is 1-dimensional. 
Space and time are rescaled such that $\bar\Omega=[-1,1]$ and the propagation speed is 1. 
This yields
\begin{equation}
\tau(x,y)=\tau_0+|x-y|.
\end{equation}
In this case, equation (\ref{NF-1pop_model}) becomes
\begin{equation}\label{NF1d}
\frac{\partial u}{\partial t}(t,x)=-\alpha u(t,x)+\int_{-1}^1 J(x,r)S(u(t-\tau(x,r),r))dr.
\end{equation}
The connectivity and activation functions are, respectively,
\begin{equation}
J(x,r)=\hat J(x-r)=\sum_{j=1}^N \hat{c}_j e^{-\mu_j |x-r|},\quad \hat{c}_j\in\mathbb{R},\ \mu_j\in\mathbb{R},\ x,r\in[-1,1],
\end{equation} 
and 
\begin{equation}\label{activation}
S(u)=\frac{1}{1+e^{-\sigma u}}-\frac{1}{2},\ \forall u\in\mathbb{R}.
\end{equation}
Hopf bifurcations play an important role in the analysis of 
neural field equations. By choosing the steepness parameter $\sigma$ of the 
activation function as bifurcation parameter, we can simulate, using the dGcG(1) scheme, the space-time 
evolution of the solution beyond a Hopf bifurcation. As in \cite{vg}, we choose the parameters $\alpha=1$ and $\sigma=6$ in the 
activation function (\ref{activation}) and the delay $\tau_0=1.$ In this simulation the connectivity 
function has a bi-exponential form 
\begin{equation}\label{1dconnectivity}
\hat{J}(x)=\hat{c}_1 e^{-\mu_1 |x|}+\hat{c}_2 e^{-\mu_2 |x|},\quad |x|\leq 1,
\end{equation}
with $\hat{c}_1=3.0,\ \hat{c}_2=-5.5,\ \mu_1=0.5,\ \mu_2=1.0.$ 
Figure \ref{f:ContourHopf} shows the time evolution of the system and 
Figure \ref{SurfaceHopf} is a surface plot of the numerical solution.  
\begin{figure}
\centering
\includegraphics[scale=.35]{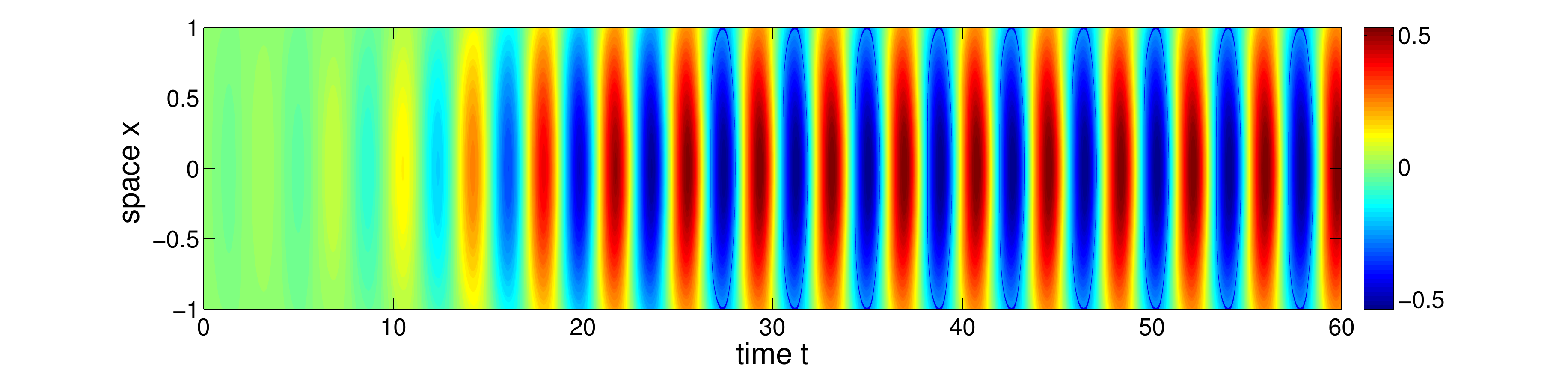}
\caption{Time evolution of system (\ref{NF1d}) for $\sigma=6,$ beyond a Hopf bifurcation.}\label{f:ContourHopf}
\end{figure}
\begin{figure}
\centering
\includegraphics[scale=0.45]{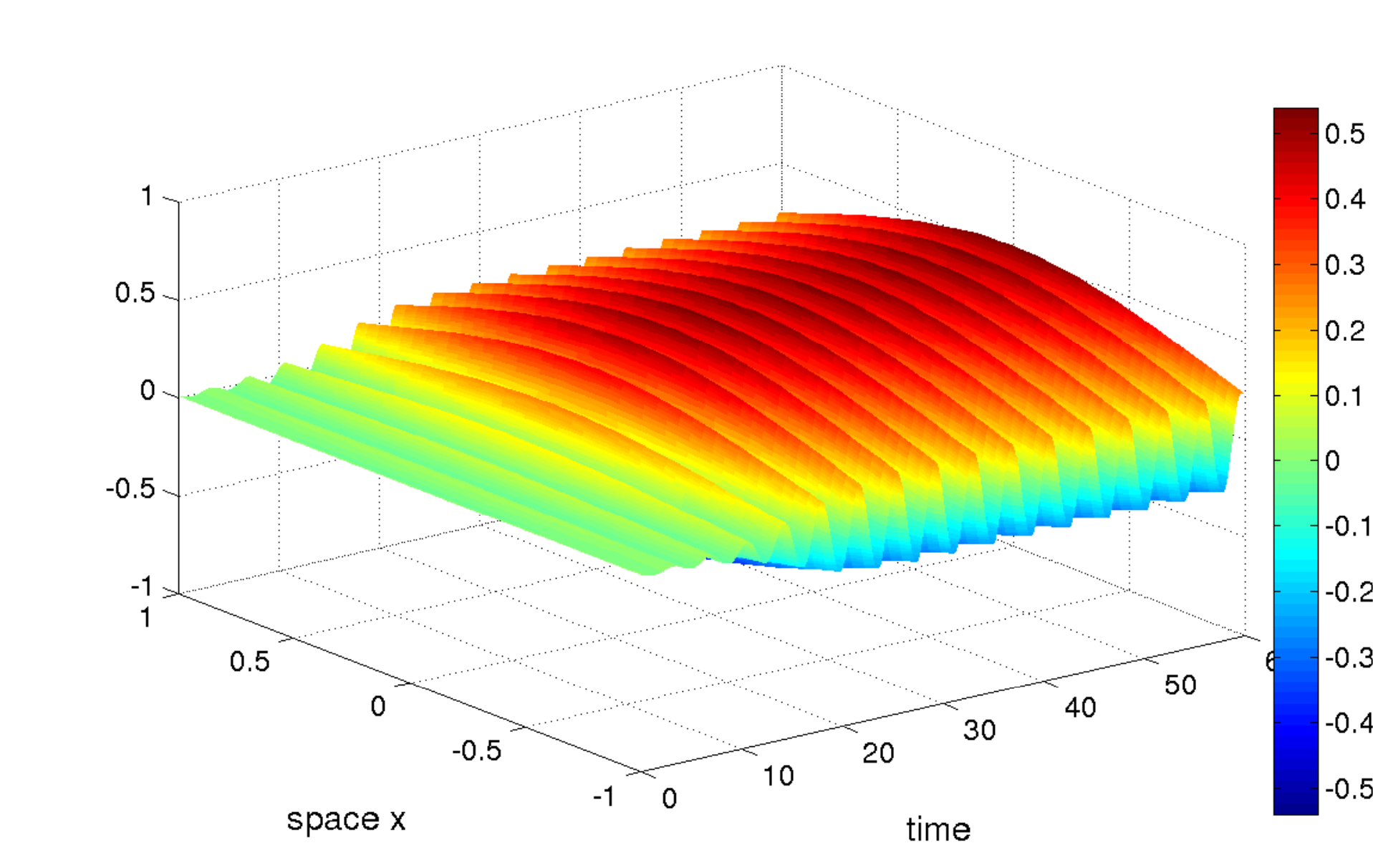}
\caption{Surface plot of the time evolution of the system (\ref{NF1d}) for $\sigma=6,$ beyond a Hopf bifurcation.}\label{SurfaceHopf}
\end{figure}
The initial function for this simulation is $u(t,x)=\epsilon=0.01,$ $t\in[-\tau_{max},0].$ 
Note that, because the size of the delay is relatively large compared to the time step, we do not need to linearize the 
system to solve the algebraic equations with a Newton method. 
\subsection{Neural fields with spatial inhomogeneity}\label{s:inhomogenity}
Consider the neural field equation \eqref{NF1d} with the locally changed connectivity 
\begin{equation}
\tilde J(x,y)=J(x,y)+\omega J(x,y)\mid_{\tilde\Omega}, \quad \omega>0,
\end{equation}
where $J$ is given in \eqref{1dconnectivity} with the same parameters and $\tilde\Omega\subset\Omega.$ The activation function 
is given in \eqref{activation} with the bifurcation parameter $\sigma=4,$ chosen below the threshold for Hopf bifurcation to 
occur in the homogeneous case, see \cite{vg}. In Figures \ref{f:Contour_inhom}, \ref{f:Surface_inhom} and \ref{Solution_inhom}, 
we compare the solution of the system with homogeneous kernel, with the solution where we have locally 
changed the connectivity, 
specifically in one element, i.e., $\tilde\Omega=K\in\bar{\mathcal{T}}_h.$ Our simulations show that while the solution converges to a 
steady state in the homogeneous case, in the inhomogeneous case the solution becomes periodic ($\omega=15$). This is a new 
phenomenon observed in the one dimensional case. It requires, however, further bifurcation analysis in the two-parameter space $(\sigma,\omega).$
\begin{figure}
\centering
  \includegraphics[scale=0.4]{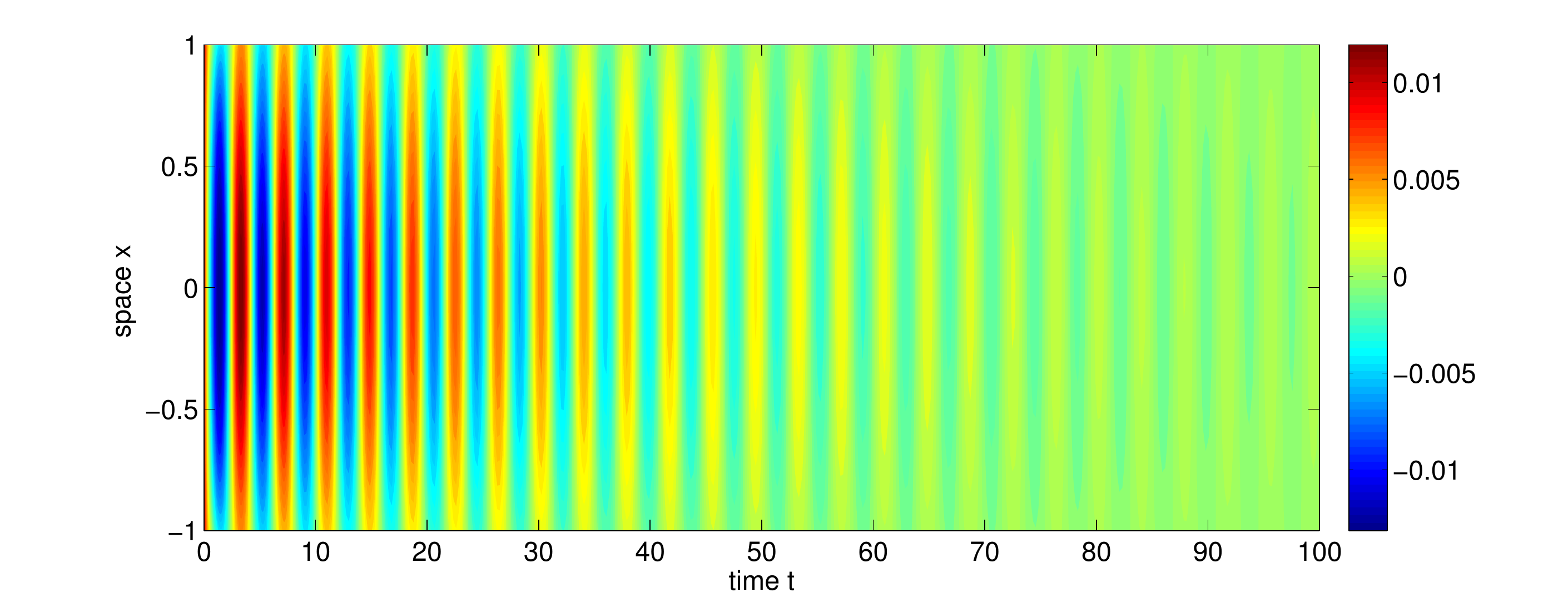}
  \includegraphics[scale=0.39]{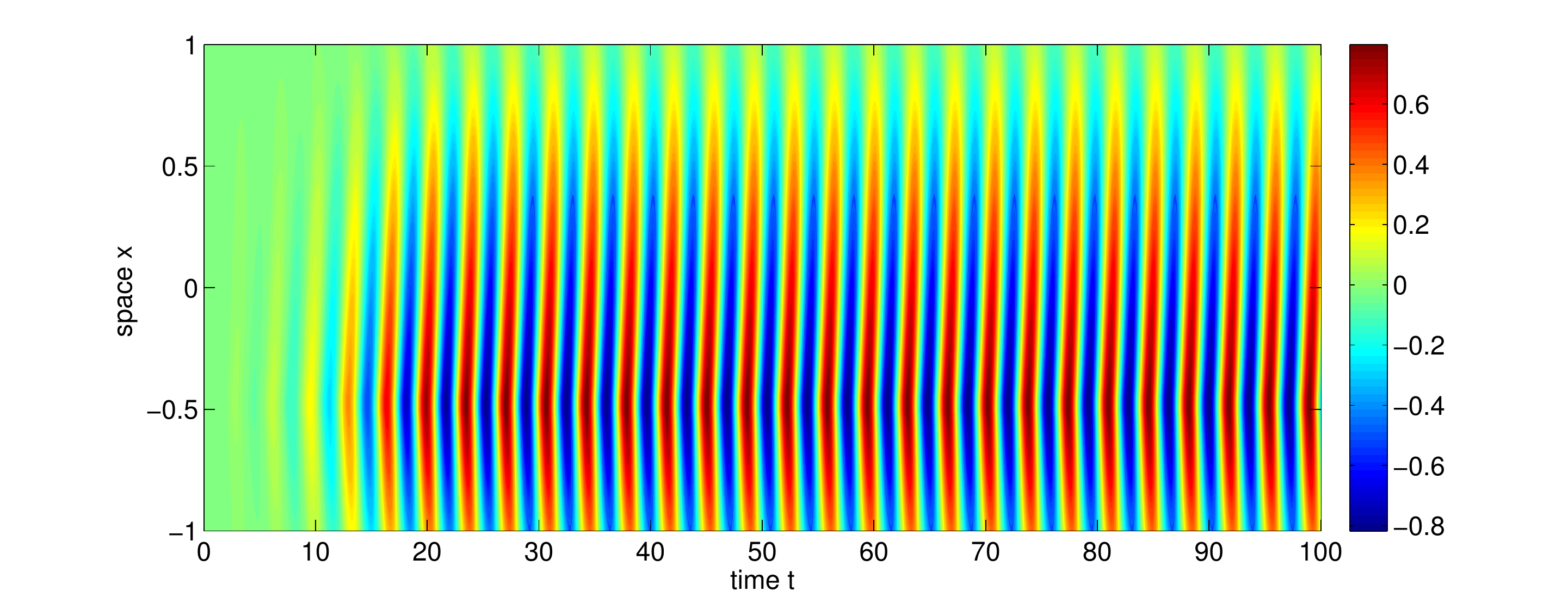}
  \caption{Time evolution of the system (\ref{NF1d}) for $\sigma=4,$ in the homogeneous (top) and the inhomogeneous (bottom) case.}\label{f:Contour_inhom}
\end{figure}
\begin{figure}
\centering
\subfigure[]{\epsfig{file = 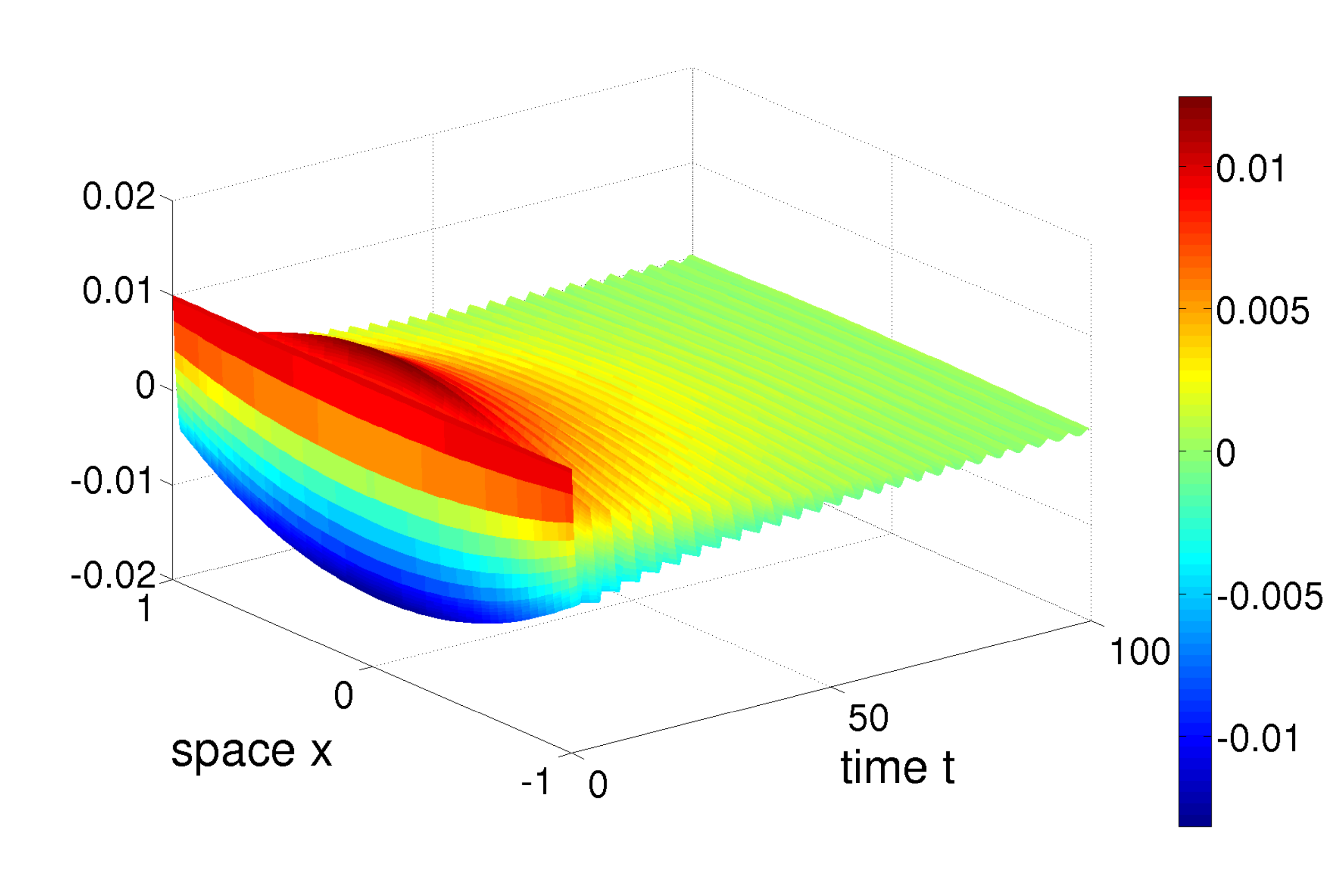, width = 0.48\textwidth}}
  \subfigure[]{\epsfig{file = 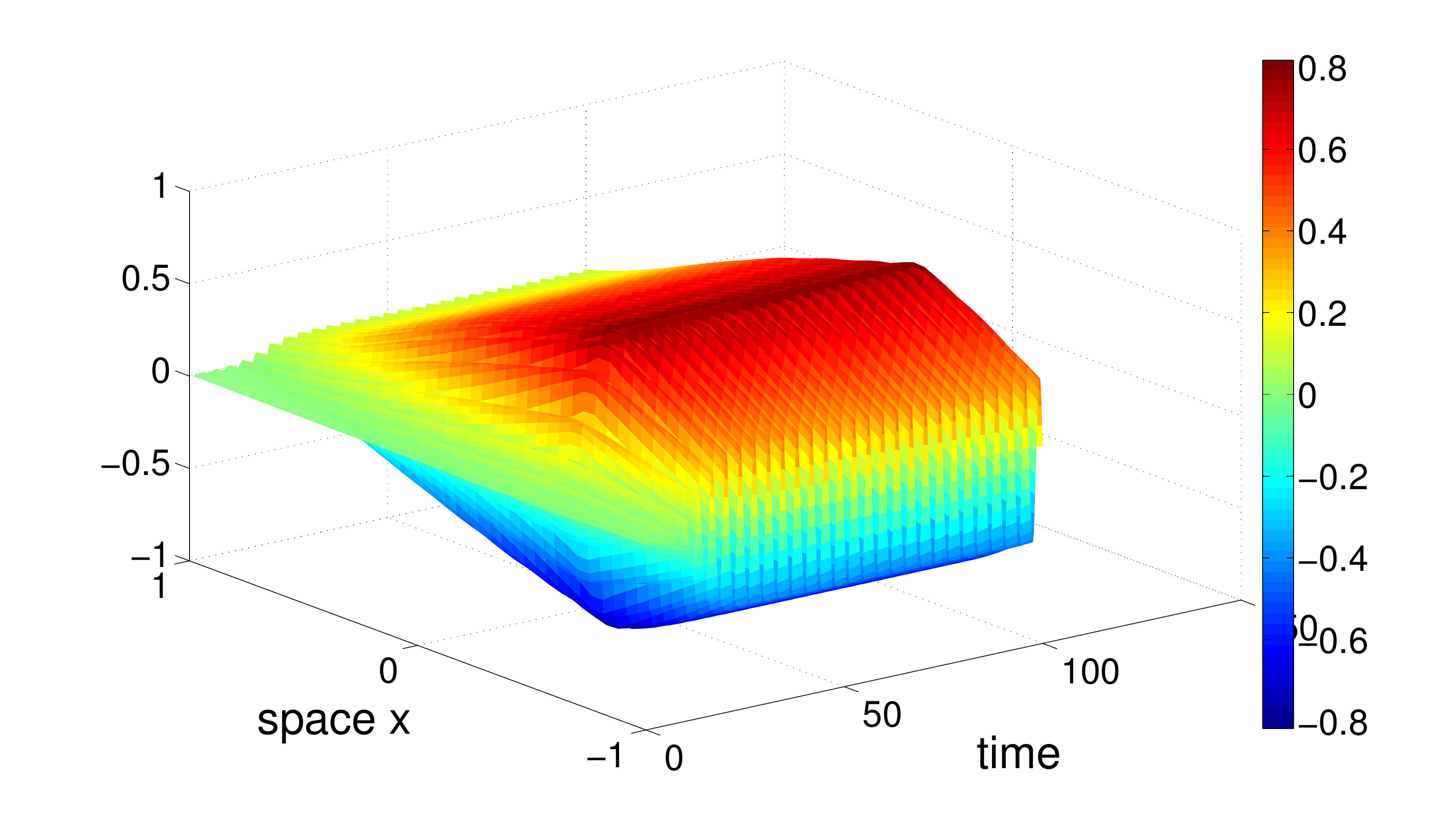, width = 0.5\textwidth}}
  \caption{Time evolution of the system (\ref{NF1d}) at given spatial position $x$ for $\sigma=4,$ in the homogeneous (a) and the inhomogeneous (b) case.}\label{f:Surface_inhom}
\end{figure}
\begin{figure}
\centering
  \subfigure[]{\epsfig{file = 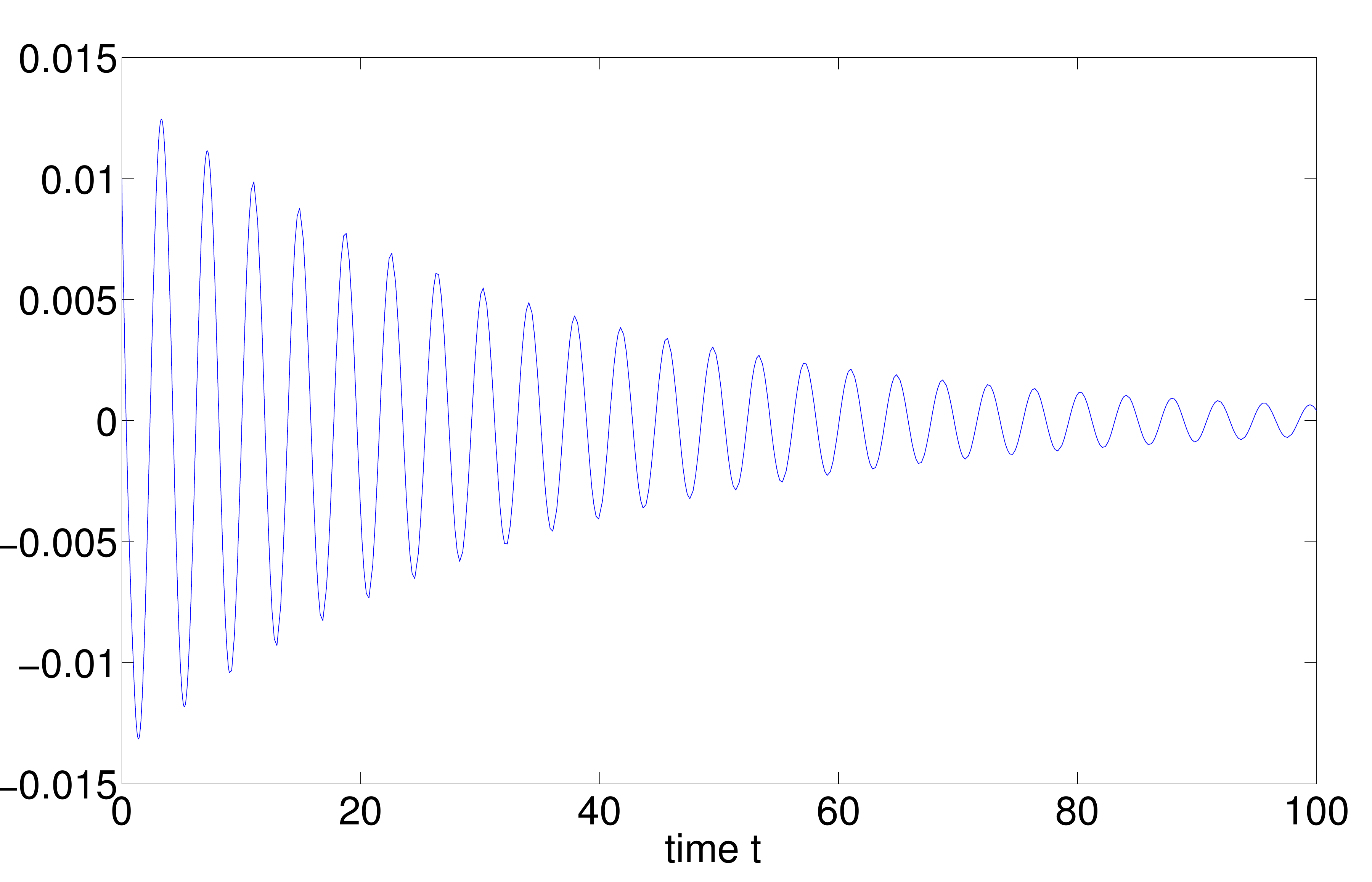, width = 0.47\textwidth}}
  \subfigure[]{\epsfig{file = 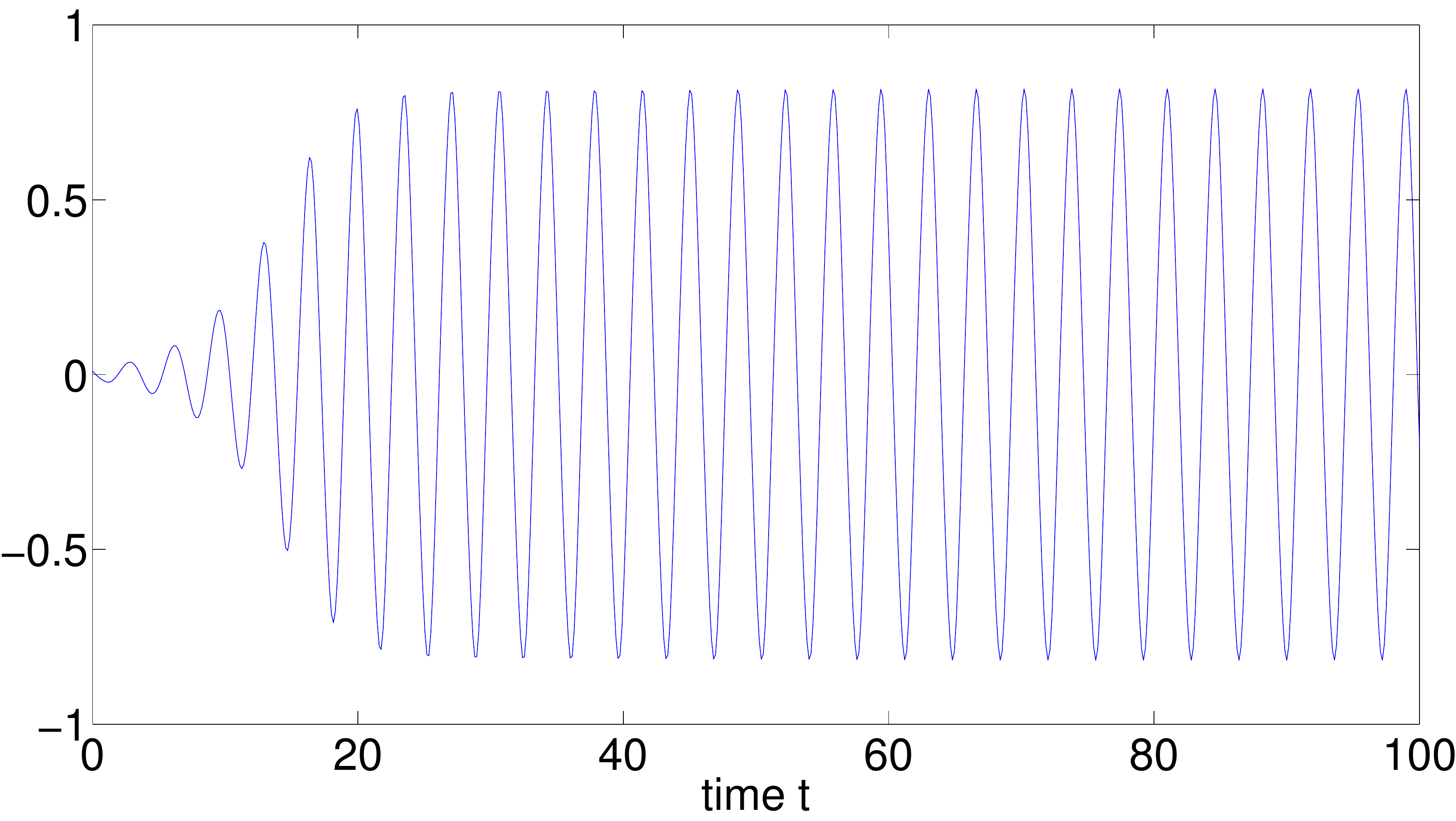, width = 0.51\textwidth}}
  \caption{Time evolution of the system (\ref{NF1d}) at given spatial position $x$ for $\sigma=4,$ in the homogeneous (a) and the inhomogeneous (b) case.}\label{Solution_inhom}
\end{figure}
\section{Concluding remarks}
In this article we have presented a new space-time dGcG-FEM to solve delay integro-differential 
equations with space dependent delays. The main result is an a-priori error estimate of the space-time dGcG 
method, which also shows that the method is numerically stable. We demonstrated that by using a dGcG method we can 
handle general connectivity, synaptic activation and delay functions, and do not need to make any restriction on 
spatial dimension or shape of the 
domain. This makes it possible to extend our model to more general domains as well as more populations in the system, 
which are particularly interesting for our applications. 
\section*{Acknowledgments}
The first author was supported by the Hungarian Scientific Research Fund, Grant No. K109782. 
The ELI-ALPS project (GOP-1.1.1.-12/B-2012-0001, GINOP-2.3.6-15-2015-00001) is supported by the 
European Union and co-financed by the European Regional Development Fund.

\end{document}